\def\@acmplainindent{0pt}
\def\@acmdefinitionindent{0pt}
\def\@proofindent{\noindent}
\newtheorem{theo}{Theorem}[section]
\newtheorem{lemma}[theo]{Lemma}
\newtheorem{prop}[theo]{Proposition}
\newtheorem{coro}[theo]{Corollary}
\theoremstyle{definition}
\newtheorem{remark}[theo]{Remark}
\newtheorem{example}[theo]{Example}
\newenvironment{algoenv}[3][\linewidth]{
\begin{minipage}{#1}%
\flushleft
\rule{\textwidth}{.08em}\vspace{-0.5\baselineskip}\smallskip
\begin{description}[noitemsep]
\item[Input] #2
\item[Output] #3
\end{description}
\vspace{-0.7\baselineskip}
\rule{\textwidth}{.05em}
\begin{algorithmic}
}{\end{algorithmic}
\vspace{-.5\baselineskip}
\rule{\textwidth}{.08em}
\end{minipage}}
\newcommand{\softO}{\tilde{{O}}}
\newcommand{\bigO}{{{O}}}
\newcommand{\poly}{\text{poly}}
\newcommand{\Resultant}{\operatorname{Res}}
\newcommand{\Z}{\mathbb Z}
\newcommand{\gfield}{{k}}             
\newcommand{\GField}[1]{\mathbb{F}_{#1}} 
\newcommand{\Kfield}{{K}}
\newcommand{\Lfield}{{L}}
\newcommand{\Hol}{\mathcal H}
\newcommand{\Mer}{\mathcal K}
\newcommand{\residue}{\operatorname{res}}
\newcommand{\FOp}{\operatorname{\sf F}} 
\newcommand{\AlgFOp}{\operatorname{\sf F}}
\newcommand{\SerFOp}{\operatorname{\sf F}}
\newcommand{\SOp}[1]{\operatorname{\sigma}_{#1}} 
\newcommand{\AlgSOp}[1]{\operatorname{\sf S}_{#1}} 
\newcommand{\eval}[1]{\operatorname{\sf eval}_{#1}} 
\newcommand{\basis}{\mathcal B}
\newcommand{\Hone}{\textbf{(H1)}}
\newcommand{\Htwo}{\textbf{(H2)}}
\title[Fast coefficient computation for algebraic power series in char. $>0$]{Fast coefficient computation for algebraic power series in positive characteristic}
\author{Alin Bostan}
\address{Inria, France}
\email{alin.bostan@inria.fr}
\author{Xavier Caruso}
\address{CNRS, France}
\email{xavier.caruso@normalesup.org}
\author{Gilles Christol}
\address{IMJ, France}
\email{christol.gilles@gmail.com}
\author{Philippe Dumas}
\address{Inria, France}
\email{philippe.dumas@inria.fr}
\begin{document}

\begin{abstract} We revisit Christol's theorem on algebraic power series in
positive characteristic and propose yet another proof for it. This new proof
combines several ingredients and advantages of existing proofs, which make it
very well-suited for algorithmic purposes. We apply the construction used in
the new proof to the design of a new efficient algorithm for computing the
$N$th coefficient of a given algebraic power series over a perfect field of
characteristic~$p$. It has several nice features: it is more general, more
natural and more efficient than previous algorithms. Not only the arithmetic
complexity of the new algorithm is linear in $\log N$ and quasi-linear in~$p$,
but its dependency with respect to the degree of the input is much smaller
than in the previously best algorithm. {Moreover, when the ground field is
finite, the new approach yields an even faster algorithm, whose bit complexity
is linear in $\log N$ and quasi-linear in~$\sqrt{p}$}.
\end{abstract}

\maketitle


\section{Introduction}
\label{BCCD:sect:introduction}
Given a perfect field $k$ of characteristic $p>0$, we address the
following question: how quickly can one compute the $N$th coefficient~$f_N$ of
an algebraic power series
\[
  f(t) = \sum_{n\geq 0} f_n t^n \in \gfield[[t]],
\]
where $N$ is assumed to be a large positive integer?
This question was recognized as a very important one in complexity
theory, as well as in various applications to algorithmic number theory:
Atkin-Swinnerton-Dyer congruences, integer factorization,
discrete logarithm and point-counting~\cite{ChuChu90,BoGaSc07}.

As such, the question is rather vague; both the data structure and the
computation model have to be stated more precisely.
The algebraic series~$f$ will be specified in $\gfield[[t]]$ as some root of a
polynomial $E(t,y)$ in $\gfield[t,y]$, of degree $d = \deg_y E \geq 1$ and of
height $h = \deg_t E$. To do this specification unequivocally, we will make
several assumptions. First, we assume that~$E$ is \emph{separable}, that is
$E$ and its derivative~$E_y = {\partial E}/{\partial y}$ are coprime
in~$\gfield(t)[y]$. 
Second, we assume that~$E$ is
\emph{irreducible}\footnote{The first assumption
is not always implied by the second one, as exemplified by $E = y^p - t \in
\GField{p}[t,y]$, and in general by any irreducible polynomial $E$ in
$\gfield[t,y^p]$.} in~$\gfield(t)[y]$. 
Note that both assumptions are satisfied if $E$ is assumed to be the minimal polynomial of~$f$
and that irreducibility implies separability as soon as we know that $E$ has
at least one root in $k[[t]]$.
The polynomial $E$ might have several roots in $\gfield[[t]]$. In order 
to specify uniquely its root~$f$, we further assume that we 
are given a nonnegative integer $\rho$ together with 
$f_0, \ldots, f_{{2\rho}}$ in $\gfield$ such that 
\begin{align*}
E(t, f_0 + f_1 t + \dotsb + f_{2\rho} t^{2\rho}) 
& \equiv 0 \pmod{t^{2\rho+1}}, \\
E_y(t, f_0 + f_1 t + \dotsb + f_{\rho} t^{\rho}) 
& \not \equiv 0 \pmod{t^{\rho+1}}.
\end{align*}
In other words, the data structure used to represent~$f$ is the polynomial 
$E$ together with the initial coefficients~$f_0, \ldots, f_{2\rho}$. 
(Actually $\rho{+}1$ coefficients are enough to 
ensure the uniqueness of $f$. However $2\rho{+}1$ coefficients are needed to 
ensure its existence; for this reason, we will always assume the
coefficients of $f$ are given up to index $2\rho$.)
We observe that it is always possible to choose $\rho$ less than or
equal to the $t$-adic valuation of the $y$-resultant of $E$ and $E_y$, 
hence \emph{a fortiori} $\rho \leq (2d{-}1)h$.

Under these assumptions, the classical Newton iteration~\cite{KuTr78} 
allows the computation of the first $N$ coefficients of~$f$ in 
quasi-linear complexity $\softO(N)$. Here, and in the whole article 
(with the notable exception of Section~\ref{BCCD:sect:recurrences}), the 
algorithmic cost is measured by counting the number of basic arithmetic 
operations $(+,-,\times,\div)$ and applications of the Frobenius map ($x\mapsto 
x^{p}$) and of its inverse ($x \mapsto x^{1\hspace{-0.2ex}/\hspace{-0.2ex}p}$) 
in the ground field $k$. The soft-O notation $\softO( \cdot)$ indicates that 
polylogarithmic factors in the argument are omitted.
Newton's iteration thus provides a quasi-optimal algorithm to 
compute~$f_0,\ldots,f_N$. A natural and important question is whether 
faster alternatives exist for computing the coefficient~$f_N$ alone.

With the exception of the rational case ($d = 1$), where the $N$th coefficient
can be computed in complexity~$\bigO(\log N)$ by binary
powering~\cite{Fiduccia85}, the most efficient algorithm currently known to
compute~$f_N$ in characteristic~$0$ has complexity~$\softO(\sqrt N)$~\cite{ChudnovskyChudnovsky88}. It relies on baby step / giant step
techniques, combined with fast multipoint evaluation.

Surprisingly, in positive characteristic~$p$, a radically different approach
leads to a spectacular complexity drop to~$\bigO(\log N)$. However, the big-O
term hides a (potentially exponential) dependency in~$p$. The good behavior of
this estimate with respect to the index~$N$ results from two facts. First, if
the index~$N$ is written in radix $p$ as $(N_{\ell-1} \ldots N_1
N_0)_{p}$, then the coefficient~$f_N$ is given by the simple formula
\begin{equation}\label{BCCD:eq:basic-formula-fN}
f_N = [(\AlgSOp{N_{\ell-1}}\dotsb\AlgSOp{N_1}\AlgSOp{N_0} f)(0)]^{p^{\ell}},
\end{equation}
where the~$\AlgSOp{r}$ ($0 \leq r < p$) are the \emph{section} 
operators defined by
\begin{equation}\label{BCCD:eq:local-section}
	\AlgSOp{r} \sum_{n\geq 0} g_n t^n = \sum_{n\geq 0} g_{pn + r}^{1\hspace{-0.2ex}/\hspace{-0.2ex}p} t^n. 
\end{equation} 
Note that for the finite field~$\GField{p}$ the exponents~$p^{\ell}$ in~\eqref{BCCD:eq:basic-formula-fN}  and~$1/p$ in~\eqref{BCCD:eq:local-section} are useless, since the Frobenius map $x \mapsto x^p$ is the identity map in this case. 

Second, by Christol's theorem~\cite{Christol79,ChKaMeRa80,Harase88}, the
coefficient sequence of an algebraic power series $f$ over a perfect field
$\gfield$ of characteristic~$p>0$ is \emph{$p$-automatic}: this means that
$f$ generates a finite-dimensional
$k$-vector space under the action of the section operators. Consequently, with respect to a fixed $k$-basis of this
vector space, one can express $f$ as a column vector~$C$, the section
operators~$\AlgSOp{r}$ as square matrices~$A_r$ ($0\leq r <p$), and the
evaluation at~$0$ as a row vector~$R$.
Formula~\eqref{BCCD:eq:basic-formula-fN} then becomes
\begin{equation}\label{BCCD:eq:matrix-formula-fN} 
	f_N = [RA_{N_{\ell-1}}\dotsb A_{N_1}A_{N_0}C]^{p^{\ell}}. 
\end{equation} 
Since~$\ell$ is about~$\log N$, and since the size of the matrices $A_r$ does
not depend on $N$, formula \eqref{BCCD:eq:matrix-formula-fN} yields an
algorithm of complexity~$\bigO(\log N)$. This observation (for any
{$p$-automatic} 
sequence) is due to Allouche and Shallit~\cite[Cor. 4.5]{AlSh92}.
However, this last assertion hides the need to first find the linear
representation $(R, (A_r)_{0\leq r<p}, C)$. As shown 
in~\cite[Ex.~5]{BoChDu16}, already in the case of a finite prime field,
translating the $p$-automaticity in terms of linear algebra
yields matrices $A_r$ whose size can be about $d^2h p^{2d}$. Thus,
their precomputation has a huge impact on the cost with respect to the 
prime number~$p$.

In the particular case of a prime field $\gfield = \GField{p}$, and under the
assumption $E_y(0,f_0) \neq 0$, this was improved in~\cite{BoChDu16} by
building on an idea originally introduced by Christol in~\cite{Christol79}:
one can compute $f_N$ in complexity $\softO((h+d)^5hp) + O((h+d)^2
h^2 \log N)$. So far, this was the best complexity result for
this task.

\medskip
\noindent{\bf Contributions.}
We further improve the complexity result from~\cite{BoChDu16} down to 
$\softO(d^2 hp + d^\omega h) + O(d^2h^2 \log N)$ 
(Theorem~\ref{thm:Nth-via-Hermite-Pade}, 
Section~\ref{BCCD:sect:faster-algorithm}).
Here $\omega$ is the exponent of matrix multiplication.
In the case where $\gfield$ is a finite field, we propose an even faster 
algorithm, with bit complexity linear in $\log N$ and quasi-linear 
in~$\sqrt{p}$ (Theorem~\ref{theo:Nth-via-recurrence}, 
Section~\ref{BCCD:sect:recurrences}). It is obtained by blending 
the approach in Section~\ref{BCCD:sect:faster-algorithm} with 
ideas and techniques imported from the characteristic zero case~\cite{ChudnovskyChudnovsky88}.
All these successive algorithmic 
improvements are consequences of our main theoretical result 
(Theorem~\ref{BCCD:thm:stability}, Section~\ref{BCCD:sect:key-theorem}), 
which can be thought of as an effective version of Christol's Theorem
(and in particular reproves it).

\section{Effective version of Christol's theorem}

We keep the notation of the introduction. Christol's theorem is stated
as follows.

\begin{theo}[Christol]
\label{BCCD:thm:Christol}
Let~$f(t)$ in~$\gfield[[t]]$ be a formal power series that is algebraic 
over~$\gfield(t)$, where~$\gfield$ is a perfect field with positive 
characteristic.
Then there exists a finite-dimensional $\gfield$-vector space 
containing~$f(t)$ and stable by the section operators.
\end{theo}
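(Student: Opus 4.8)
The plan is to exhibit an explicit finite-dimensional $\gfield$-vector space $V$ containing $f$ and closed under all the section operators $\AlgSOp{r}$ ($0 \le r < p$). The natural candidate, going back to Christol, is to build $V$ from the rational functions that arise when one differentiates the algebraic relation and clears denominators. Concretely, let $E(t,y) \in \gfield[t,y]$ be the separable irreducible polynomial with $E(t,f) = 0$, write $d = \deg_y E$ and $h = \deg_t E$, and set $E_y = \partial E/\partial y$. Since $E$ is separable, $E_y(t,f) \ne 0$ in $\gfield[[t]]$. I would first show, by $t$-adically differentiating $E(t,f)=0$, that the derivative $f'$ — and more generally every power $f^i$ for $0 \le i < d$, and products thereof with suitable powers of $t$ — can be written as $g/E_y(t,f)$ for some polynomial $g$ in a fixed finite set. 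This suggests taking
\[
V \;=\; \Bigl\{\, \frac{A(t,f)}{E_y(t,f)} \;:\; A \in \gfield[t,y],\ \deg_t A \le a,\ \deg_y A < d \,\Bigr\}
\]
for a bound $a$ to be determined; this is a $\gfield$-vector space of dimension at most $d(a+1)$, and $f$ itself lies in $V$ (take $A = f \cdot E_y$, reduced modulo $E$ in $y$, noting $\deg_t$ stays bounded).

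The heart of the argument is then to check stability: for each $r$ and each generator $A(t,f)/E_y(t,f)$ of $V$, the series $\AlgSOp{r}\bigl(A(t,f)/E_y(t,f)\bigr)$ again has the form $B(t,f)/E_y(t,f)$ with $B$ in the same degree box. The key algebraic identity is that $\AlgSOp{r}$ interacts well with the Frobenius twist: one has $\AlgSOp{r}(u^p \, v) = u \cdot \AlgSOp{r}(v)$ for $u, v \in \gfield[[t]]$ (this is immediate from the definition~\eqref{BCCD:eq:local-section}). The standard trick is to multiply numerator and denominator by $E_y(t,f)^{p-1}$, so that the denominator becomes $E_y(t,f)^p = E_y(t^p, f^p)$ — a $p$th power up to replacing $t$ by $t^p$ — which can be pulled out through $\AlgSOp{r}$. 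What remains is $\AlgSOp{r}$ applied to a polynomial expression $A(t,f) E_y(t,f)^{p-1} \in \gfield[t,f]$, which, after reducing $f^i$ for $i \ge d$ via $E(t,f)=0$, is a $\gfield$-linear combination of the monomials $t^j f^i$ with controlled degrees; and $\AlgSOp{r}(t^j f^i)$ is handled by the same reduction once one expresses it back over the denominator $E_y(t,f)$. The only delicate point is bookkeeping the $t$-degrees: reducing $f^{d}, \dots, f^{dp}$ modulo $E$ raises $\deg_t$, so one must choose the bound $a$ large enough (on the order of $h$, or more precisely tied to the valuation-of-resultant bound $\rho \le (2d-1)h$ mentioned in the introduction, to also control the negative powers of $t$ that $E_y(t,f)^{-1}$ introduces) that the box is preserved. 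I would also need to handle the finitely many denominators that vanish at $t=0$: either work in a localization and track valuations explicitly, or absorb a fixed power of $t$ into the definition of $V$.

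The main obstacle, as usual with Christol's theorem, is the degree bookkeeping in the stability step — showing that a single, $N$-independent choice of the box $\{\deg_t \le a, \deg_y < d\}$ is simultaneously preserved by all $p$ section operators, despite the degree growth coming from (i) multiplying by $E_y^{p-1}$, (ii) reducing high powers of $f$ modulo $E$, and (iii) the fact that $\AlgSOp{r}$ roughly divides $t$-degrees by $p$ but the intermediate products have $t$-degrees multiplied by $p$. The payoff of organizing the proof this way — rather than via the more abstract module-finiteness arguments — is that it is entirely constructive: it yields an explicit spanning set for $V$ and explicit matrices for the $\AlgSOp{r}$, which is exactly what is needed for the effective statement (Theorem~\ref{BCCD:thm:stability}) and the algorithms that follow.
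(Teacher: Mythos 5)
Your overall shape (a space of elements $A(t,f)/E_y(t,f)$ with $\deg_y A<d$ and bounded $\deg_t A$, plus the semi-linearity $\AlgSOp{r}(u^p v)=u\,\AlgSOp{r}(v)$ applied after multiplying numerator and denominator by $E_y(t,f)^{p-1}$) is the right kind of candidate, but the decisive step is missing. After you write
\[
\AlgSOp{r}\Bigl(\frac{A(t,f)}{E_y(t,f)}\Bigr)
=\frac{1}{E_y(t,f)}\,\AlgSOp{r}\bigl(A(t,f)\,E_y(t,f)^{p-1}\bigr),
\]
you still must show that $\AlgSOp{r}$ applied to a polynomial expression in $t$ and $f$ is again of the form $B(t,f)/E_y(t,f)$ with $B$ in a fixed degree box. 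You assert that $\AlgSOp{r}(t^j f^i)$ ``is handled by the same reduction'', but no such reduction is available: expanding $\AlgSOp{r}(b(t)f^i)$ via the semi-linearity formula~\eqref{BCCD:eq:lack-of-k(t)-linearity} only reintroduces the unknown quantities $\AlgSOp{r-s}(f^i)$, i.e.\ exactly the images you are trying to control. Knowing that the sections map $k[t,f]$ into a fixed finite-dimensional space is not easier than Christol's theorem itself, so as written the stability argument is circular. (A smaller slip: $E_y(t,f)^p$ is not $E_y(t^p,f^p)$ unless the coefficients of $E_y$ are fixed by Frobenius; the identity $\AlgSOp{r}(u^pv)=u\,\AlgSOp{r}(v)$ is all you need there, and it is correct.)

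The paper breaks this circularity by performing the $(p-1)$-st power trick somewhere else. It embeds $k[t,y]$ into $\Mer=k((t))((T))$ via $y\mapsto f+T$, where a Furstenberg-type residue computation gives $\residue\bigl(P(t,f{+}T)/E(t,f{+}T)\bigr)=P(t,f)/E_y(t,f)$ (Lemma~\ref{BCCD:lem:residue}); it then defines two-variable sections $\AlgSOp{r,s}$ relative to the basis $t^{r/p}(f{+}T)^{s/p}$, which act monomial-by-monomial on $k[t,y]$ and therefore visibly divide both degrees by $p$. The power trick is applied to the denominator $E(t,f{+}T)$ (not to $E_y$), yielding $Q=\AlgSOp{r,p-1}(PE^{p-1})\in k[t,y]_{<h,<d}$, and the commutation $\AlgSOp{r}\circ\residue=\residue\circ\AlgSOp{r,p-1}$ (Proposition~\ref{BCCD:prop:commResSec}) transports the result back to $k((t))$. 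If you wish to stay entirely in one variable you would need a different mechanism for computing the sections of $f^i$ (e.g.\ Ore's relation among $f,f^p,f^{p^2},\dots$, or Furstenberg's diagonal representation); without one, your stability step does not go through.
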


\noindent
The aim of this section is to state and to prove an effective version
of Theorem~\ref{BCCD:thm:Christol}, on which our forthcoming 
algorithms will be built.
Our approach follows the initial treatment by
Christol~\cite{Christol79}, which is based on Furstenberg's theorem 
\cite[Thm.~2]{Furstenberg67}. For the application we have in mind, it
turns out that the initial version of Furstenberg's theorem will be
inefficient; hence we will first need to strengthen it, considering
residues around the moving point~$f(t)$ instead of residues at~$0$.
Another new input we shall use is a globalization argument 
allowing us to compare section operators at $0$ and at~$f(t)$.
This argument is formalized throught Frobenius operators and is 
closely related to the Cartier operator used in a beautiful geometric 
proof of Christol's theorem due to Deligne~\cite{Deligne84} and 
Speyer~\cite{Speyer10}, and further studied by Bridy~\cite{Bridy17}.

\subsection{Frobenius and sections}
\label{BCCD:sect:frobenius-sections}

Recall that the ground field~$\gfield$ is assumed to be a perfect field 
of prime characteristic~$p$, for example a finite field~$\GField{q}$, 
where $q=p^s$. Let $\Kfield = \gfield(t)$ be the field of rational 
functions over~$\gfield$ and let $\Lfield = \Kfield[y]/(E)$.

Since $\gfield$ is a perfect field, the Frobenius endomorphism $\AlgFOp: k
\rightarrow k$ defined by $x \mapsto x^p$ is an automorphism of $\gfield$. It
extends to a ring homomorphism, still denoted by~$\AlgFOp$,
from~$\Lfield[t^{1/p}]$ to~$\Lfield$ which raises an element
of~$\Lfield[t^{1/p}] = \Lfield^{1/p}$ to the power~$p$. 
This homomorphism is an isomorphism
and its inverse writes
\begin{equation}\label{BCCD:eq:global-section}
  \AlgFOp^{-1} = \sum_{r=0}^{p-1} t^{r/p} \AlgSOp{r},
\end{equation}
where each~$\AlgSOp{r}$, with $0 \leq r < p$, maps~$\Lfield$ onto itself. 

The use in~\eqref{BCCD:eq:global-section} of the same notation as in 
Formula~\eqref{BCCD:eq:local-section}
is not a mere coincidence.
The algebraic series~$f$ provides an embedding of~$\Lfield$ into the 
field of Laurent series~$\gfield((t))$, which is the evaluation of an 
element $P(y)$ of $\Lfield$ at the point~$y=f(t)$. We will 
call~$\eval{f} : L \rightarrow \gfield((t))$ the corresponding map, 
which sends $P(y)$ to $P(f(t))$.
The Frobenius operator extends from $L$ to~$\gfield((t))$, and the same 
holds for the sections~$\AlgSOp{r}$ ($0 \leq r < p$).
These extensions are exactly those of
Eq.~\eqref{BCCD:eq:local-section}. 
The~$\AlgSOp{r}$'s in Eq.~\eqref{BCCD:eq:global-section} then appear as 
global variants of the~$\AlgSOp{r}$'s in 
Eq.~\eqref{BCCD:eq:local-section}. Moreover, global and local operators 
are compatible, in the sense that they satisfy
\begin{equation}
  \AlgFOp\circ\eval{f} = \eval{f}\circ\AlgFOp, \quad 
  \AlgSOp{r}\circ\eval{f}  = \eval{f}\circ\AlgSOp{r}. 
\end{equation}

As for rational functions, the Frobenius operator and the section
operators induce, respectively, a ring isomorphism~$\FOp$
from~$\Kfield[t^{1/p}]$ onto~$\Kfield$ and maps~$\SOp{r}$ ($0 \leq r < p$)
from~$\Kfield$ onto~$\Kfield$ such that
$\AlgFOp^{-1} = \sum_{r=0}^{p-1} t^{r/p} \SOp{r}$.
The operators~$\AlgFOp$ and~$\AlgSOp{r}$ ($0 \leq r < p$) are not
$\Kfield$-linear but only $\gfield$-linear. More precisely, for any~$\lambda$
in~$\Kfield[t^{1/p}]$, $\mu$ in~$\Kfield$, and~$z$ in~$\Lfield$,
\begin{equation}\label{BCCD:eq:lack-of-k(t)-linearity}
  \AlgFOp(\lambda z)  = \FOp(\lambda) \AlgFOp(z) \quad\text{and}\quad
  \AlgSOp{r}(\mu z)  = \sum_{s=0}^{p-1} t^{\lfloor \frac{r+s}{p} \rfloor}
  \! 
  \SOp{s}(\mu) \AlgSOp{r-s}(z).
\end{equation}
In other words both $\AlgFOp$ and $\AlgFOp^{-1}$ are actually semi-linear.

\subsection{The key theorem}
\label{BCCD:sect:key-theorem}

Let ${\gfield[t,y]_{< h,< d}}$ be the set of polynomials $P\in\gfield[t,y]$
such that $\deg_t P < h$ and $\deg_y P < d$.
\begin{theo}\label{BCCD:thm:stability}
For $P \in {\gfield[t,y]_{< h,< d}}$ and for $0\leq r < p$, 
there exists a (unique) polynomial $Q$ in $\gfield[t,y]_{< h,< d}$ such that
\begin{equation}\label{BCCD:eq:stability}
  \AlgSOp{r}\left(\frac{P}{E_y}\right) \equiv \frac{Q}{E_y} \pmod E.
\end{equation}
\end{theo}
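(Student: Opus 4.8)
The plan is to recast the section operators on $L$ in terms of the Cartier operator of the curve $C\colon E(t,y)=0$, and then to use the good functoriality of the latter. The starting point is that $\AlgFOp^{-1}$ is a ring homomorphism, so $L$ decomposes as $L=\bigoplus_{r=0}^{p-1}t^rL^p$ — the classes of $t^0,\dots,t^{p-1}$ are $L^p$-linearly independent because $t^{1/p}\notin L$, which is where separability of $E$ enters — and the $r$-th component of $h\in L$ in this decomposition is $t^r\AlgSOp{r}(h)^p$. From this one reads off two elementary facts: $\AlgSOp{r}(h)=\AlgSOp{p-1}\bigl(t^{p-1-r}h\bigr)$, and, after identifying $\Omega^1_{L/\gfield}$ with $L\,dt$ (legitimate since separability of $E$ makes $t$ a separating variable, so $dt\neq 0$), the identity $\AlgSOp{p-1}(h)\,dt=\mathcal{C}(h\,dt)$, where $\mathcal{C}$ denotes the Cartier operator of the function field $L$. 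Indeed $\mathcal{C}(c^p\omega)=c\,\mathcal{C}(\omega)$ while $\mathcal{C}(t^i\,dt)=0$ for $0\le i\le p-2$ and $\mathcal{C}(t^{p-1}\,dt)=dt$, so $\mathcal{C}$ applied to $h\,dt=\sum_{r=0}^{p-1}\AlgSOp{r}(h)^p\,t^r\,dt$ retains only the term $r=p-1$. Putting these together, Theorem~\ref{BCCD:thm:stability} is equivalent to the statement that $\mathcal{C}\bigl(t^{p-1-r}\tfrac{P\,dt}{E_y}\bigr)$ equals $\tfrac{Q\,dt}{E_y}$ for a (necessarily unique) polynomial $Q\in\gfield[t,y]_{<h,<d}$.

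I would first obtain $Q$ as a polynomial. The adjunction formula identifies the dualizing sheaf $\omega_{C}$ of the plane affine curve $C$ with the invertible $\gfield[t,y]/(E)$-module generated by the Poincaré residue $\tfrac{dt}{E_y}=-\tfrac{dy}{E_t}$, so $H^0\bigl(C,\omega_{C}\bigr)=\tfrac{1}{E_y}\bigl(\gfield[t,y]/(E)\bigr)\,dt$. The Cartier operator preserves this module: on the smooth locus this is the classical fact that $\mathcal{C}$ carries regular differentials to regular differentials, and at the finitely many singular points of $C$ one invokes the Cartier operator on the dualizing sheaf of the Gorenstein curve $C$. Since $t^{p-1-r}\tfrac{P\,dt}{E_y}$ is a global section of $\omega_C$, so is its image under $\mathcal{C}$, which therefore has the form $\tfrac{Q\,dt}{E_y}$ with $Q\in\gfield[t,y]$ and $\deg_y Q<d$. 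Uniqueness is immediate since $E_y$ is a unit modulo $E$ and representatives of $y$-degree $<d$ are unique.

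Next come the degree bounds. Compactifying inside $\bP^1_t\times\bP^1_y$, the closure $\bar C$ of $C$ is a curve of bidegree $(h,d)$ whose dualizing class has, by adjunction against $\omega_{\bP^1\times\bP^1}$ of bidegree $(-2,-2)$, degree small enough that a Riemann–Roch count identifies $\tfrac{1}{E_y}\gfield[t,y]_{<h,<d}\,dt$ with $H^0$ of the dualizing sheaf $\omega_{\bar C}$ twisted by the divisor $\{t=\infty\}+\{y=\infty\}$ — that is, with the sections of $\omega_C$ whose poles at the points of $\bar C$ above $t=\infty$ and $y=\infty$ are bounded by that divisor. Multiplying by $t^{p-1-r}$ increases the pole order over $t=\infty$ by at most $p-1$ times the local ramification index, whereas $\mathcal{C}$ deflates pole orders by a factor $p$, precisely $\operatorname{ord}_v(\mathcal{C}\omega)\ge\lceil(\operatorname{ord}_v\omega+1)/p\rceil-1$ at every place $v$. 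A place-by-place check above $t=\infty$ and $y=\infty$ shows that these two effects cancel exactly, so $\mathcal{C}\bigl(t^{p-1-r}\tfrac{P\,dt}{E_y}\bigr)$ obeys the original pole bound; that is, $Q\in\gfield[t,y]_{<h,<d}$, as wanted.

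The hard part is not the smooth, monic, generic situation but the handling of the possibly singular curve: $E$ need not be monic in $y$ and $C$ (or its compactification) may genuinely be singular, so one must work systematically with the dualizing sheaf rather than with naive regular differentials, verify that the algebraically-defined $\AlgSOp{p-1}$ is the Cartier operator on $\omega_C$ there, and carry out the pole bookkeeping at infinity in all cases. This is precisely why the paper follows a different, more hands-on route: rather than invoking adjunction, it proves a strengthened Furstenberg residue formula expressing $\AlgSOp{r}(P/E_y)$ through residues of $t^{p-1-r}\tfrac{P(t,y)\,dy}{E(t,y)}$ taken around the $d$ branches of $E$ at the place cut out by $y=f(t)$, uses the Frobenius operators $\FOp$ and $\SOp{r}$ to globalize between that local picture and the operators on $L$, and reduces the degree estimate to a Newton-polygon computation for $E$ at $t=0$ and $t=\infty$. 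The mathematical content is the same as in the sketch above.
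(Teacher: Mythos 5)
Your proposal is correct in outline, but it is a genuinely different proof from the one in the paper: you have essentially reconstructed the geometric argument of Deligne, Speyer and Bridy via the Cartier operator, which the paper cites as ``closely related'' but deliberately does not follow. The paper instead expands every element of $L$ around the single moving point $y=f$ inside $\Mer=\gfield((t))((T))$, proves the one-branch residue identity $\residue\bigl(P(t,f{+}T)/E(t,f{+}T)\bigr)=P(t,f)/E_y(t,f)$, introduces doubly-indexed sections $\AlgSOp{r,s}$ adapted to the basis $t^{r/p}(f{+}T)^{s/p}$, and shows $\AlgSOp{r}\circ\residue=\residue\circ\AlgSOp{r,p-1}$; the polynomial $Q$ is then produced by the explicit formula $Q=\AlgSOp{r,p-1}(PE^{p-1})$, and the bound $Q\in\gfield[t,y]_{<h,<d}$ is a one-line bidegree count ($PE^{p-1}$ has bidegree $<(ph,\,p(d-1)+p)$ and $\AlgSOp{r,p-1}$ divides bidegrees by $p$). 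So your closing description of the paper's route is inaccurate on two points: it uses one branch at the moving point rather than the $d$ branches at fixed $t$, and the degree estimate is trivial rather than a Newton-polygon analysis. What your route buys is conceptual clarity (stability is literally the statement that the Cartier operator preserves $H^0$ of a twisted dualizing sheaf) and, as I checked, the local estimate $\operatorname{ord}_v(\mathcal{C}\omega)\ge\lceil(\operatorname{ord}_v\omega+1)/p\rceil-1$ does exactly absorb the extra pole of order $(p-1-r)e_v$ from $t^{p-1-r}$ against the twist $e_v$ at each place over $t=\infty$, so the bookkeeping closes. What it costs is the full duality package for the possibly singular compactification: Rosenlicht's description of $\omega_{\bar C}$, the fact that $\mathcal{C}$ preserves it (via compatibility of $\mathcal{C}$ with residues and additivity of $x\mapsto x^{1/p}$), and an arithmetic-genus Riemann--Roch count to see that $\tfrac{1}{E_y}\gfield[t,y]_{<h,<d}\,dt$ is \emph{exactly} $H^0(\bar C,\omega_{\bar C}(D))$ and not a proper subspace --- these are the points you correctly flag but do not carry out, and they are also why the explicit formula for $Q$, which the paper's algorithms rely on, is harder to extract from your argument.
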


The rest of this subsection is devoted to the proof of 
Theorem~\ref{BCCD:thm:stability}. Although mainly algebraic, the proof 
is based on the rather analytic remark that any algebraic function in 
$k(t)[f]$ can be obtained as the residue at $T=f$ of some rational 
function in $k(t,T)$ (see Lemma~\ref{BCCD:lem:residue}).  This idea was 
already used in Furstenberg~\cite{Furstenberg67}, whose work has been 
inspiring for us.
The main new insight of our proof is the following: we replace several 
small branches around zero by a single branch around a moving 
point. In order to make the argument work, we shall need further to 
relate the behavior of the section operators around $0$ and around the 
aforementioned moving point. This is where the reinterpretation of the 
$\AlgSOp r$'s in terms of Frobenius operators will be useful.

\smallskip

We consider the ring $\Hol = k((t))[[T]]$ of power series over 
$k((t))$. Its fraction field is the field $\Mer = k((t))((T))$ of 
Laurent series over $k((t))$.
There is an embedding $\gfield((t))[y] \to \Hol$ taking a
polynomial in $y$ to its Taylor expansion around $f$. Formally, it
is simply obtained by mapping the variable $y$ to $f{+}T$. It extends 
to a field extension $\gfield((t))(y) \to \Mer$. We will often write 
$P(t,f{+}T)$ for the image of $P(t,y) \in \gfield((t))(y)$ in $\Mer$.
The field $\Mer$ is moreover endowed with a \emph{residue map} $\residue : 
\Mer \to k((t))$, defined by $\residue \big(\sum_{i=v}^\infty a_i T^i\big)  = a_{-1}$ (by convention, 
$a_{-1} = 0$ if $v > -1$). It is clearly $\gfield((t))$-linear.

\begin{lemma}
\label{BCCD:lem:residue}
For any polynomial $P \in \gfield((t))[y]$, the following equality holds:
$$\residue\left(\frac{P(t,f{+}T)}{E(t,f{+}T)}\right) = \frac{P(t,f)}{E_y(t,f)}.$$
\end{lemma}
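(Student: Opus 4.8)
The plan is to factor out the simple zero of the denominator at $T = 0$ and then read off a residue as a constant term. Since $E(t,y)$ is a polynomial in $y$, substituting $y = f{+}T$ and collecting powers of $T$ is a purely formal operation over $k((t))$; the coefficient of $T^0$ is $E(t,f)$ and the coefficient of $T^1$ is $E_y(t,f)$. Thus $E(t,f{+}T) = E(t,f) + E_y(t,f)\,T + T^2 R(t,T)$ for some $R \in \Hol = k((t))[[T]]$. Because $\eval{f}$ realises $f$ as a root of $E$ inside $k((t))$, we have $E(t,f) = 0$, and hence $E(t,f{+}T) = T\cdot u(T)$ with $u(T) := E_y(t,f) + T\,R(t,T) \in \Hol$.

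The key point is that $u$ is a unit in $\Hol$. Indeed, $\Hol = k((t))[[T]]$ is a power series ring over the field $k((t))$, so an element is invertible as soon as its constant coefficient is nonzero; here that coefficient is $E_y(t,f)$, which is nonzero precisely because $E$ is separable: a B\'ezout relation $AE + BE_y = 1$ in $k(t)[y]$, evaluated at $y = f$ over $k((t))$ and using $E(t,f) = 0$, yields $B(t,f)\,E_y(t,f) = 1$. Consequently, inside $\Mer$,
$$\frac{P(t,f{+}T)}{E(t,f{+}T)} = \frac{1}{T}\cdot\frac{P(t,f{+}T)}{u(T)},$$
and the factor $P(t,f{+}T)/u(T)$ lies in $\Hol$, i.e. it is a genuine power series in $T$ with no negative exponents.

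It then remains to extract the residue, namely the coefficient of $T^{-1}$ in the displayed expression, which is exactly the coefficient of $T^0$ in $P(t,f{+}T)/u(T)$; that constant term is obtained by setting $T = 0$, giving $P(t,f)/u(0) = P(t,f)/E_y(t,f)$. I do not expect a genuine obstacle here: the only steps deserving care are the formal expansion of $E(t,f{+}T)$ (which is harmless even in characteristic $p$, since $E$ is a polynomial and no division by factorials is involved) and the non-vanishing of $E_y(t,f)$, which is exactly where the separability hypothesis on $E$ is used.
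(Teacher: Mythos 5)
Your proof is correct. Both arguments rest on the same starting point --- $E(t,f{+}T)$ has a simple zero at $T=0$ because $f$ is a simple root of $E$ --- but you extract the residue by a different decomposition. The paper writes $E(t,f{+}T)=T\,q(T)$ and takes the logarithmic derivative in $T$, which gives $P/E = g(T)/T + g(T)\,q'(T)/q(T)$ with $g=P(t,f{+}T)/E_y(t,f{+}T)$, so the residue is $g(0)$ (this is Furstenberg's device). You instead identify the unit explicitly from the Taylor expansion $E(t,f{+}T)=E_y(t,f)\,T+T^2R(t,T)$, invert $u(T)=E_y(t,f)+TR(t,T)$ in $\Hol$, and read the residue off as the constant term of $P(t,f{+}T)/u(T)$. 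Your route is slightly more direct, avoids introducing $q'/q$, and makes explicit via the B\'ezout relation why $E_y(t,f)\neq 0$, a point the paper simply asserts from the simple-root hypothesis; the two computations agree since $q(0)=u(0)=E_y(t,f)$. Your remark that the first-order Taylor coefficient is exactly $E_y(t,f)$ with no factorial denominators is the right thing to check in characteristic $p$.
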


\begin{proof}
Since $f$ is a simple root of $E$, the series $E(t,f{+}T)$ has a simple 
zero at $T=0$. This means that it can be written $E(t,f{+}T) = T \cdot 
q(T)$ with $q \in \Hol$, $q(0) \neq 0$.
Taking the logarithmic derivative with respect to~$T$ gives
\[
\frac{E_y(t,f{+}T) }{ E(t,f{+}T) } = \frac{ 1 }{ T } + \frac{ q'(T) }{ q(T) },
\]
akin to~\cite[Formula~(15), p.~276]{Furstenberg67},
from which we derive
\[
\frac{P(t,f{+}T)}{E(t,f{+}T)} = \frac {g(T)} T +
g(T) \, \frac{q'(T)}{q(T)},
\]
where  $g(T) = P(t,f{+}T) / E_y(t,f{+}T)$.
Since $E_y(t,f{+}T)$ does not vanish at $T=0$, the series $g(T)$ has 
no pole at $0$. Therefore, the residue of $g(T) / T$ is nothing
but $g(0)$. Besides the residue of the second summand 
$g(T)\, q'(T) / q(T)$ vanishes. All in all, the residue of 
$P(t,f{+}T)/E(t,f{+}T)$
is $g(0) + 0 = P(t,f)/E_y(t,f)$.
\end{proof}

We now introduce analogues of section operators over $\Mer$.
For this, we first observe that the Frobenius operator $x \mapsto
x^p$ defines an isomorphism $\SerFOp: \Mer[t^{1/p}, T^{1/p}] \to \Mer$.
Moreover $\Mer[t^{1/p}, T^{1/p}]$ is a field extension of 
$\Mer$ of degree $p^2$. A basis of $\Mer[t^{1/p}, T^{1/p}]$
over $\Mer$ is of course $(t^{r/p} \, T^{s/p})_{0 \leq r,s < p}$, but it
will be more convenient for our purposes to use a different one. It
is given by the next lemma.

\begin{lemma}
\label{BCCD:lem:basisMer}
The family 
$(t^{r/p} \, (f{+}T)^{s/p})_{0 \leq r,s < p}$ is a basis
of $\Mer[t^{1/p}, T^{1/p}]$ over~$\Mer$.
\end{lemma}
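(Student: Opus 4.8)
The plan is to separate the extension $\Mer[t^{1/p},T^{1/p}]/\Mer$ into a ``$t$-part'' and a ``$T$-part'', and then, inside the $T$-part, to pass from the evident basis $(T^{s/p})_{0\le s<p}$ to $\big((f{+}T)^{s/p}\big)_{0\le s<p}$ through an invertible triangular change of coordinates. Concretely, write $\Mer' := \Mer[t^{1/p},T^{1/p}] = k((t^{1/p}))((T^{1/p}))$ and introduce the intermediate field $\Mer_1 := k((t^{1/p}))((T))$, so that $\Mer\subseteq\Mer_1\subseteq\Mer'$. Since bases are multiplicative along a tower of field extensions, it suffices to establish two things: (a) that $(t^{r/p})_{0\le r<p}$ is a basis of $\Mer_1$ over $\Mer$, and (b) that $\big((f{+}T)^{s/p}\big)_{0\le s<p}$ is a basis of $\Mer'$ over $\Mer_1$.

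Claim (a) is immediate from perfectness of $k$: splitting a Laurent series in $t^{1/p}$ according to the class modulo $p$ of its exponents gives the decomposition $k((t^{1/p})) = \bigoplus_{r=0}^{p-1} t^{r/p}\,k((t))$, and extending to Laurent series in $T$ turns this into $\Mer_1 = \bigoplus_{r=0}^{p-1} t^{r/p}\,\Mer$. For claim (b), the key observation is the identity $(f{+}T)^{1/p} = f^{1/p}+T^{1/p}$ in $\Mer'$: here $f^{1/p}\in k[[t^{1/p}]]\subseteq\Mer_1$ (again because $k$ is perfect), $T^{1/p}\in\Mer'$, and $(f^{1/p}+T^{1/p})^p = f+T$ by additivity of the Frobenius, so the two sides coincide because $p$-th roots are unique in characteristic~$p$. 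Expanding, $(f{+}T)^{s/p} = (f^{1/p}+T^{1/p})^s = \sum_{k=0}^{s}\binom{s}{k}\,f^{(s-k)/p}\,T^{k/p}$, and every coefficient $f^{(s-k)/p}=(f^{1/p})^{s-k}$ lies in $\Mer_1$. Using $\Mer' = \bigoplus_{s=0}^{p-1} T^{s/p}\,\Mer_1$ exactly as in (a), the family $\big((f{+}T)^{s/p}\big)_s$ is the image of the basis $(T^{s/p})_s$ under a lower-triangular matrix over $\Mer_1$ whose diagonal entries are all $1$; this matrix is invertible, so $\big((f{+}T)^{s/p}\big)_s$ is again a basis of $\Mer'$ over $\Mer_1$. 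Combining (a) and (b) proves the lemma.

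No step here is deep; the only things requiring care are the bookkeeping that keeps (b) a change of basis \emph{over $\Mer_1$} rather than merely over $\Mer'$ --- namely that each $f^{(s-k)/p}$ genuinely lands in $\Mer_1$, which is exactly where perfectness of $k$ re-enters --- and the (standard but worth stating) point that the symbol $(f{+}T)^{s/p}$ must be read as $\big(\SerFOp^{-1}(f{+}T)\big)^{s}$, which is unambiguous precisely because $\SerFOp$ is an isomorphism onto $\Mer$.
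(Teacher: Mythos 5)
Your proof is correct and follows essentially the same route as the paper's: the change of basis from $(T^{s/p})_s$ to $\big((f{+}T)^{s/p}\big)_s$ via the expansion $(f{+}T)^{s/p}=(f^{1/p}+T^{1/p})^s$, giving a triangular unipotent (hence invertible) matrix. Your explicit tower $\Mer\subseteq\Mer_1\subseteq\Mer'$ merely spells out a point the paper leaves implicit, namely that the matrix entries $f^{(s-k)/p}$ live in $k[[t^{1/p}]]$ rather than in $\Mer$, so the change of basis must be performed over the intermediate field.
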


\begin{proof}
For simplicity, we set $y = f{+}T \in \Mer$.
We have:
$$\big(\begin{matrix}
1 & y^{1/p} & \cdots & y^{(p-1)/p}
\end{matrix}\big) = 
\big(\begin{matrix}
1 & T^{1/p} & \cdots & T^{(p-1)/p}
\end{matrix}\big) \cdot U$$
where $U$ is the square matrix whose $(i,j)$ entry (for $0 \leq
i,j < p$) is $\binom j i \, f^{i/p}$.
In particular, $U$ is upper triangular and all its diagonal entries
are equal to $1$. Thus $U$ is invertible and the conclusion follows.
\end{proof}

For $r$ and $s$ in $\{0, 1, \ldots, p{-}1\}$, we define 
the section operators $\AlgSOp{r,s} : \Mer \to \Mer$ by
$$\SerFOp^{-1} = \sum_{r=0}^{p-1} \sum_{s=0}^{p-1} 
t^{r/p} (f{+}T)^{s/p} \: \AlgSOp{r,s}.$$
(These operations look like those used in~\cite[\S3.2]{BoChDu16}, but
they are not exactly the same.)
Clearly $\AlgSOp{r,0}$ extends the operator $\AlgSOp{r} : \gfield((t)) 
\to \gfield((t))$ defined by Eq.~\eqref{BCCD:eq:local-section} and 
$\AlgSOp{r,s}(g_1^p g_2) = g_1 \AlgSOp{r,s}(g_2)$ for all $g_1,g_2
\in \Mer$. We 
observe moreover that the $\AlgSOp{r,s}$'s stabilize the subrings 
$\gfield((t))[y]$ and $\gfield[t,y]$, since $y$ corresponds to $f{+}T$.

\begin{prop}
\label{BCCD:prop:commResSec}
The following commutation relation holds over $\Mer$:
$$\AlgSOp r \circ\:\residue = \residue \circ \AlgSOp{r,p{-}1}.$$
\end{prop}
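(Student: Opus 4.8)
The plan is to derive the identity by comparing two descriptions of the Frobenius section operator $\SerFOp^{-1}$ on $\Mer$, through an auxiliary residue on the degree-$p^2$ extension $\Mer[t^{1/p},T^{1/p}] = k((t^{1/p}))((T^{1/p}))$. Write $\residue' \colon \Mer[t^{1/p},T^{1/p}] \to k((t^{1/p}))$ for the map extracting the coefficient of $T^{-1/p}$; it is $k((t^{1/p}))$-linear, so in particular $\residue'(t^{r/p}\xi) = t^{r/p}\,\residue'(\xi)$ for $0 \le r < p$.

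First I would record that for $z = \sum_i a_i T^i \in \Mer$ (with $a_i \in k((t))$) one has $\SerFOp^{-1}(z) = z^{1/p} = \sum_i a_i^{1/p}\, T^{i/p}$, since $x \mapsto x^p$ is a continuous ring homomorphism in characteristic $p$. Extracting the coefficient of $T^{-1/p}$ — that is, the term $i = -1$ — gives at once
\[
  \residue'\bigl(\SerFOp^{-1}(z)\bigr) = a_{-1}^{1/p} = \bigl(\residue z\bigr)^{1/p}.
\]

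The crux is the next computation. For $0 \le r,s < p$ and $w = \sum_i w_i T^i \in \Mer$, expand $(f{+}T)^{s/p} = \sum_{j=0}^{s} \binom{s}{j}\, f^{(s-j)/p}\, T^{j/p}$ (again using that $x\mapsto x^p$ is a ring homomorphism, and that the binomial coefficients, lying in $\GField{p}$, are fixed by it). In the product $t^{r/p}(f{+}T)^{s/p}\,w$ the monomial $T^{-1/p}$ can only arise from a pair $(j,i)$ with $j/p + i = -1/p$; since $0 \le j \le s < p$, the only possibility is $j = p{-}1$, $i = -1$, and this requires $s = p{-}1$. In that case the surviving coefficient is $\binom{p-1}{p-1}\,f^{0} = 1$ — the moving point $f$ disappears — so that
\[
  \residue'\bigl(t^{r/p}(f{+}T)^{s/p}\,w\bigr) =
  \begin{cases}
    t^{r/p}\,\residue(w) & \text{if } s = p{-}1,\\
    0 & \text{otherwise.}
  \end{cases}
\]
I expect this to be the one genuinely delicate point of the argument: it is here that one sees \emph{why} the index $p{-}1$ (and no other) occurs in the statement, and it is exactly where the single branch around the moving point behaves, as far as residues are concerned, like a branch around $0$.

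Finally, I would apply $\residue'$ to the defining identity $\SerFOp^{-1}(z) = \sum_{r,s} t^{r/p}(f{+}T)^{s/p}\,\AlgSOp{r,s}(z)$: by the first step the left-hand side equals $(\residue z)^{1/p}$, while by the second step the right-hand side collapses to $\sum_{r=0}^{p-1} t^{r/p}\,\residue\bigl(\AlgSOp{r,p-1}(z)\bigr)$. Comparing this with $(\residue z)^{1/p} = \sum_{r=0}^{p-1} t^{r/p}\,\AlgSOp{r}(\residue z)$, which is just the definition of the local section operators on $k((t))$, and using the linear independence of $1, t^{1/p}, \dots, t^{(p-1)/p}$ over $k((t))$, one obtains $\AlgSOp{r}(\residue z) = \residue\bigl(\AlgSOp{r,p-1}(z)\bigr)$ for every $r$, which is precisely the claimed commutation relation. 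Apart from the bookkeeping in the third step, everything here is formal.
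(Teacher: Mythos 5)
Your proof is correct and follows essentially the same route as the paper's: both extract the coefficient of $T^{-1/p}$ from the two expressions for $\SerFOp^{-1}(z)$ (the termwise $p$-th root and the defining sum $\sum_{r,s} t^{r/p}(f{+}T)^{s/p}\AlgSOp{r,s}(z)$) and then identify the components along $1, t^{1/p},\dots,t^{(p-1)/p}$. Your explicit binomial expansion of $(f{+}T)^{s/p}$ merely fills in the detail that the paper states as an observation (that only $s=p{-}1$ contributes and that $f$ drops out).
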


\begin{proof}
Let us write $g \in \Mer$ as $g = \sum_{i=v}^\infty a_i T^i$ 
with $v \in \Z$ and $a_i \in \gfield((t))$ for all $i \geq v$. Its
image under $\SerFOp^{-1}$ can be expressed in two different ways
as follows:
\[
\SerFOp^{-1}(g) 
 = \sum_{i=v}^\infty \AlgFOp^{-1}(a_i) \: T^{i/p}
 = \sum_{r=0}^{p-1} \sum_{s=0}^{p-1} t^{r/p} (f{+}T)^{s/p} \: \AlgSOp{r,s}(g).
\]
We identify the coefficient in $T^{-1/p}$. For doing so, we observe 
that the terms obtained with $s < p-1$ do not contribute, while the
contribution of the term $t^{r/p} (f{+}T)^{(p-1)/p} \: \AlgSOp{r,p-1}(g)$ 
is the residue of $t^{r/p} \: \AlgSOp{r,p-1}(g)$. We then get
$$\AlgFOp^{-1} (a_{-1})
= \sum_{r=0}^{p-1} \residue \circ \AlgSOp{r,p{-}1}(g) \cdot t^{r/p}.$$
Going back to the definition of $\AlgSOp r$, we derive 
$\AlgSOp r (a_{-1}) = \residue \circ \AlgSOp{r,p{-}1}(g)$, from which the
lemma follows.
\end{proof}

\begin{proof}[Proof of Theorem~\ref{BCCD:thm:stability}]
Let $P \in \gfield[t,y]$ and $0\leq r < p$.
We set $Q = \AlgSOp{r,p-1}(P E^{p-1}) \in \gfield[t,y]$. 
Combining Lemma \ref{BCCD:lem:residue} and Proposition
\ref{BCCD:prop:commResSec}, we derive the following 
equalities:
\begin{align*}
\AlgSOp r \left(\frac{P(t,f)}{E_y(t,f)}\right)
& = \AlgSOp r \circ\: \residue\left(\frac{P(t,f{+}T)}{E(t,f{+}T)}\right) \\
& = \residue \circ \AlgSOp{r,p-1} \left(\frac{P(t,f{+}T)}{E(t,f{+}T)}\right)
= \residue \left(\frac{Q(t,f{+}T)}{E(t,f{+}T)}\right)
  = \frac{Q(t,f)}{E_y(t,f)}
\end{align*}
(compare with~\cite[\S3.2]{BoChDu16}).
The stability of $\gfield[t,y]/E(t,y)$ under $\AlgSOp r$ follows using the
fact that $E$ is the minimal polynomial of $f$ over $\Kfield = \gfield(t)$.
If we know in addition that $P$ lies in $\gfield[t,y]_{< h, <d}$ then $P 
\, E^{p-1}$ is in $\gfield[t,y]_{< ph, \leq p(d-1)}$ and, therefore, $Q$ 
falls in $\gfield[t,y]_{<h, <d}$ as well.
Theorem~\ref{BCCD:thm:stability} is proved.
\end{proof}

\begin{remark}\label{rem:refined-degree-bound}
It is possible to slightly vary the bounds on the degree and the height,
and to derive this way other stability statements.
For example, starting from a polynomial $P(t,y)$ with $\deg_t P \leq h$
and $\deg_y P \leq d$, we have:
\[
  \AlgSOp {r} \frac{P(t,f)}{E_y(t,f)} =
  \frac{Q(t,f)}{E_y(t,f)}
\]
with $\deg_t Q \leq h$ and $\deg_y P < d$. Moreover $\deg_t Q < h$
provided that $r > 0$.

Another remark in this direction is the following: if~$P$ has degree at most~$d{-}2$, the 
section~$\AlgSOp{r,p-1}(P E^{p-1})$ has degree at most~$d{-}2$ for 
any $r \in \{0,1,\ldots, p{-}1\}$. Indeed, $P E^{p-1}$ has degree at
most~$pd - 2 < p (d{-}1) + p-1$. In other words, the 
subspace~$\gfield[t,y]_{< h,\leq d-2}$ is stable by the section 
operators~$\AlgSOp{r}$ ($0 \leq r < p$).
\end{remark}

\section{Application to algorithmics}

Theorem~\ref{BCCD:thm:stability} exhibits an easy-to-handle finite 
dimensional vector space which is stable under the section operators.
In this section, we derive from it two efficient algorithms that compute 
the $N$th term of $f$ in linear time in $\log N$.
The first is less efficient, but easier to understand; 
we present it mainly for pedagogical purposes.

\subsection{First algorithm: modular computations}
\label{BCCD:sect:fast-algorithm}

The first algorithm we will design follows rather straightforwardly from 
Theorem \ref{BCCD:thm:stability}. It consists of the following steps:
\begin{enumerate}[leftmargin=*]
\item we compute the matrix giving the 
action of the Frobenius $\FOp$ with respect to the ``modified 
monomial basis'' $\basis = ( {y^{j}}/{E_y})_{0 \leq j \leq d-1}$;
\item we deduce the matrix of $\FOp^{-1}$ with respect to $\basis$;
\item we extract from it the matrices of the section operators
$\AlgSOp r$;
\item we compute the $N$th coefficient of $f$ using
Formula~\eqref{BCCD:eq:basic-formula-fN}.
\end{enumerate}

\smallskip

Let us be a bit more precise (though we will not give full details 
because we will design in \S \ref{BCCD:sect:faster-algorithm} below an 
even faster algorithm). Let $M$ be the matrix of $\FOp$ in the 
basis~$\basis$; its $j$th column contains the coordinates of the vector 
$\FOp(\frac{y^{j}}{E_y}) = \frac {y^{pj}}{E_y^p}$ in the basis $\basis$, 
which are also the coordinates of $ {y^{pj}}/{E_y^{p-1}}$ in the 
monomial basis $(1, y, \ldots, y^{d-1})$. It is easily seen that 
the matrix of~$\AlgFOp^{-1}$ with respect to $\basis$ is 
$\FOp^{-1}(M^{-1})$, which is, by definition, the matrix obtained by 
applying~$\FOp^{-1}$ to each entry of~$M^{-1}$.

We now discuss the complexity of the computation of $M^{-1}$. Thanks to 
Theorem~\ref{BCCD:thm:stability} and Eq.~\eqref{BCCD:eq:global-section}, 
we know that its entries are polynomials of degree at most $h(p{-}1)$.
However, this bound is not 
valid for the entries of $M$. Indeed, in full generality, the latter
are rational fractions whose numerators and denominators have degrees 
of magnitude $dhp$. 
In order to save the extra factor $d$, we rely on modular techniques: 
we choose a polynomial $B$ of degree $h(p{-}1)+1$ and perform all 
computations modulo $B$. {To make the method work},  $B$ 
must be chosen in such a way that both $M$ and $M^{-1}$ make sense modulo $B$, 
\emph{i.e.} $B$ must be coprime with the denominators of the entries of 
$M$. The latter condition discards a small number of choices, so that 
a random polynomial $B$ will be convenient with high probability.

Using fast polynomial and matrix algorithms, 
the computation of $M$ modulo $B$ can 
be achieved within $\softO(d^2hp)$ operations in $k$, while the inversion 
of $M$ modulo $B$ requires $\softO(d^\omega h p)$ operations in $k$, where
$\omega \in [2,3]$ is the matrix multiplication exponent.
Since we count
an application of $\AlgFOp^{-1} : k \to k$ as a unique operation, the
cost of the first two steps is $\softO(d^\omega h p)$ as well. 
The third step is free as it only consists in reorganizing coefficients. 
As for the evaluation of the formula~\eqref{BCCD:eq:basic-formula-fN}, 
each application of $\AlgSOp r$ has a cost of $O(d^2h^2)$ operations in 
$k$. The total complexity of our algorithm is then $\softO(d^\omega hp) 
+ O(d^2 h^2 \log N)$ operations in $k$.

\begin{remark}
We do not need actually to apply the Frobenius inverse $\AlgFOp^{-1} : 
k \to k$ since, at the end of the computation, we raise the last 
intermediate result at the power $p^\ell$.
The complexity $\softO(d^\omega hp) + O(d^2 h^2 \log N)$ can then be
reached even if we do not count an application of $\AlgFOp^{-1}$ as
a single operation.
\end{remark}

\subsubsection*{A detailed example}

Consider $\gfield = \GField{5}$ and the polynomial
$$E = (t^4 + t + 1) y^4 + y^ 2 + y - t^4 \, \in \, \gfield[t,y].$$
It admits a unique root~$f$ in~$k[[t]]$ which is congruent to $0$
modulo $t$.

\begin{figure*}
\begin{center}
{\tiny
\noindent\hfill%
$\begin{array}{r@{\hspace{-15ex}}l}
 & M = \left(\begin{array}{rrrr}
1 + 2t^{4} + 4t^{5} + 3t^{6} + 2t^{7} + 2t^{8} + 2t^{12} + 4t^{13} + 3t^{14} + t^{15} + 2t^{16} & \cdots \smallskip \\
t + 2t^{2} + 3t^{3} + 4t^{5} + 4t^{6} + 3t^{7} + 3t^{8} + 3t^{9} + t^{10} + 4t^{11} + t^{13} + 2t^{15} + t^{16} & \cdots \smallskip \\
1 + 2t + 3t^{2} + 3t^{5} + 2t^{6} + 2t^{7} + t^{8} + t^{9} + 3t^{10} + t^{11} + 3t^{12} + 3t^{14} + 4t^{15} + 3t^{16} & \cdots \smallskip \\
0 & \cdots
\end{array}\right) \pmod{t^{17}} \bigskip \\
& M^{-1} =  \left(\begin{array}{rrrr}
  1 + 3t^{4} + t^{8} + t^{12} + t^{13} + t^{16} 
& t^{4} + 2t^{8} 
& t^{8} + t^{12} 
& t^{12} \smallskip \\
  2 + 2t + \cdots +
  t^{9} + 3t^{12}
& 3 + 2t + \cdots +
  2t^{13} + t^{16} 
& 3 + 4t + t^{5} + t^{8} 
& 1 + 4t^{4} + 3t^{5} + 2t^{9} + 2t^{12} \smallskip \\
  4 + 2t + \cdots +
  2t^{9} + 4t^{12}
& 4 + 2t + \cdots +
  + 2t^{9} + 4t^{12}
& 1 + 2t + \cdots + 
  3t^{13} + t^{16}
& 2 + 4t + \cdots +
  4t^{5} + 2t^{8} \smallskip \\
  0 
& 0 
& 0 
& 1 + 4t + \cdots +
  4t^{13} + t^{16}
\end{array}\right)
\end{array}$%
\hfill\null}

\caption{Frobenius and its inverse in the ``modified monomial basis''.}
\label{fig:Frobandinverse}
\end{center}
\end{figure*}

The matrix~$M$ of the Frobenius $\FOp$ with respect to the basis
$\basis = (\frac 1{E_y}, \frac y{E_y}, \frac {y^2}{E_y}, \frac {y^3}{E_y})$
writes~$D^{-1} \cdot \widetilde 
M$, where~$D$ and the largest entry of~$\widetilde M$ 
have degrees~$55$ and~$39$, respectively. However, by 
Theorem~\ref{BCCD:thm:stability}, we know that  $M^{-1}$ has 
polynomial entries of degree at most~$16$. Noticing that $0$ is not a 
root of the resultant of $E$ and $E_y$, we can compute $M$ and its 
inverse modulo $B(t) = t^{17}$. The result of this 
computation is displayed partly on Figure~\ref{fig:Frobandinverse}. We 
observe that the maximal degree of the entries of $M^{-1}$ is~$16$ and 
reaches our bound $h(p{-}1)$ (which is then tight for this example).
We furthermore observe that $M$ is block triangular, as expected after
Remark~\ref{rem:refined-degree-bound}.

Let us now compute the images of $y \in L$ under the section 
operators. For this, we write
$y = E_y^{-1} \cdot \big(4 t^4 + 2 y + 3 y^2\big)$ in $L$.
We then have to compute the product $M^{-1} \cdot 
(\begin{matrix} 4t^4 & 2 & 3 & 0 \end{matrix})^T$. As a result, we
obtain:
{\scriptsize $$\left(\begin{matrix} 
t^{4} + 4t^{8} + 2t^{12} + 4t^{16} + 4t^{17} + 4t^{20} \smallskip \\
t + 3t^{4} + 2t^{5} + t^{8} + 3t^{9} + 4t^{10} + 3t^{12} + 3t^{13} + 4t^{16} \smallskip \\
1 + 2t^{2} + 3t^{3} + 4t^{4} + t^{5} + t^{6} + 4t^{7} + 4t^{8} + 3t^{9} + 2t^{10} + 2t^{13} + 4t^{16} \smallskip \\
0
\end{matrix} \right).$$}%
Rearranging the terms, we finally find that
\begin{align*}
\AlgSOp 0(y) &= E_y^{-1} \cdot \big( 4t^4 + (2t + 4t^2) y + (1 + t + 2t^2) y^2 \big) \\
\AlgSOp 1(y) &= E_y^{-1} \cdot \big( 4t^3 + (1 + 4t^3) y + (t + 4t^3) y^2 \big) \\
\AlgSOp 2(y) &= E_y^{-1} \cdot \big( (2t^2 + 4t^3) + 3t^2 y + (2 + 4t) y^2 \big) \\
\AlgSOp 3(y) &= E_y^{-1} \cdot \big( 4t + (t + 3t^2) y + (3 + 4t + 2t^2) y^2 \big) \\
\AlgSOp 4(y) &= E_y^{-1} \cdot \big( 1 + (3 + 3t) y + (4 + 3t) y^2 \big). 
\end{align*}
To conclude this example, suppose that we want to compute the 
$70$th coefficient of~$f$. Applying Eq.~\eqref{BCCD:eq:basic-formula-fN}, 
we find that it is 
equal to the constant coefficient of $\AlgSOp{2}\AlgSOp{4}\AlgSOp{0} f$.
Therefore we have to compute $\AlgSOp{2}\AlgSOp{4}\AlgSOp{0} y$.
Repeating twice what we have done before, we end up with
$$\AlgSOp{2}\AlgSOp{4}\AlgSOp{0} y 
= E_y^{-1} \cdot \big( (2 + t^2) + (4 + 3t + 3t^3) y + (2 + 4t^2 + 2t^3)y^2\big).$$
Plugging $y = f$ in the above equality, we get 
$\AlgSOp{2}\AlgSOp{4}\AlgSOp{0} f = 2 + O(t)$, from which we conclude
that $f_{70} = 2$.

\begin{remark}
In the above example, only the constant coefficient of $f$ was 
needed to carry out the whole computation. This is related to the
fact that $E_y(f(t))$
has $t$-adic valuation $0$. More generally if $E_y(f(t))$
has $t$-adic valuation $\rho$, we will need the first $\rho{+}1$ 
coefficients of $f$ since the final division by $E_y$ will induce
a loss of $t$-adic precision of $\rho$ ``digits''. This does not
change the complexity bound, 
since $\rho \leq \deg_t \Resultant_y(E,E_y)
\in O(dh)$.
\end{remark}

\subsection{Second algorithm: Hermite--Padé approximation}
\label{BCCD:sect:faster-algorithm}

For obvious reasons related to the size of the computed objects, we 
cannot hope to achieve a complexity lower than linear with respect 
to~$p$ using the approach of Section~\ref{BCCD:sect:fast-algorithm}.
However, the exponent on~$d$ still can be improved. In order to achieve
this, we return to Theorem~\ref{BCCD:thm:stability}. The key idea is to 
leap efficiently from the polynomial~$P$ to the polynomial~$Q$ in 
Formula~\eqref{BCCD:eq:stability}.

Let $P=\sum_{i=0}^{d-1} a_i(t) y^i$ in $\gfield[t,y]_{< h,< d}$ and $0\leq r < p$.
By  Theorem~\ref{BCCD:thm:stability}, 
there exists $Q=\sum_{i=0}^{d-1} b_i(t) y^i$ in $\gfield[t,y]_{< h,< d}$ such that
${\AlgSOp{r}(P/E_y) \equiv Q/E_y \pmod E}$,
or, equivalently,
\begin{equation}\label{BCCD:eq:stability-bis}
 \AlgSOp{r}\left(\sum_{i=0}^{d-1} a_i(t) 
\frac{f(t)^i}{E_y(t,f(t))} \right)
= \sum_{j=0}^{d-1} b_j(t)  
\frac{f(t)^j}{E_y(t,f(t))}.
\end{equation}
The algorithmic question is to recover efficiently the $b_i$'s starting from
the $a_i$'s. Identifying coefficients in
Eq.~\eqref{BCCD:eq:stability-bis} yields a linear system over $\gfield$
in the coefficients of the unknown polynomials $b_i$. This system has $hd$
unknowns and an infinite number of linear equations. The point is that the following truncated version of Eq.~\eqref{BCCD:eq:stability-bis}
\begin{equation}\label{BCCD:eq:stability-bis-trunc}
 \AlgSOp{r}\left(\sum_{i=0}^{d-1} a_i(t) 
\frac{f(t)^i}{E_y(t,f(t))} \right)
\equiv \sum_{j=0}^{d-1} b_j(t)  
\frac{f(t)^j}{E_y(t,f(t))} \pmod{t^{2dh}}
\end{equation}
is sufficient to uniquely determine~$Q$.
This is a direct consequence of the following.

\begin{lemma}
If $Q$ in $\gfield[t,y]_{< h,< d}$ satisfies
$ \frac{Q}{E_y}(t,f(t)) \equiv 0 \pmod {t^{2dh}}$,
then $Q=0$.
\end{lemma}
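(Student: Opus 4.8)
The statement is that a polynomial $Q \in \gfield[t,y]_{<h,<d}$ with $\frac{Q}{E_y}(t,f(t)) \equiv 0 \pmod{t^{2dh}}$ must be the zero polynomial. My plan is to argue by degree/valuation bounds: since $E_y(t,f(t))$ has bounded $t$-adic valuation, the hypothesis forces $Q(t,f(t))$ to vanish to high order, and a nonzero $Q$ of controlled bidegree cannot do that unless it is divisible by $E$, which is impossible for degree reasons.

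First I would reduce to controlling $Q(t,f(t))$ directly. Write $\rho_0 = v_t\big(E_y(t,f(t))\big)$ for the $t$-adic valuation; as noted in the excerpt, $\rho_0 \leq \deg_t \Resultant_y(E,E_y) \leq (2d-1)h < 2dh$. The hypothesis $\frac{Q}{E_y}(t,f(t)) \equiv 0 \pmod{t^{2dh}}$ then gives $v_t\big(Q(t,f(t))\big) \geq 2dh + \rho_0 \geq 2dh$, and in particular $v_t\big(Q(t,f(t))\big) \geq 2dh$. So it suffices to show: if $Q \in \gfield[t,y]_{<h,<d}$ and $v_t\big(Q(t,f(t))\big) \geq 2dh$, then $Q = 0$.

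Next, suppose for contradiction that $Q \neq 0$. Since $E$ is the minimal polynomial of $f$ over $\Kfield = \gfield(t)$ and $\deg_y Q < d = \deg_y E$, the rational function $Q(t,y)$ is not divisible by $E(t,y)$, hence $Q(t,f(t)) \neq 0$ as an element of $\gfield((t))$. Now I want an a priori upper bound on $v_t\big(Q(t,f(t))\big)$ that contradicts the lower bound $2dh$. The cleanest route is via the resultant: consider $\Resultant_y(E, Q) \in \gfield[t]$. Since $E$ is irreducible and $\deg_y Q < \deg_y E$, this resultant is a nonzero polynomial in $t$, and its degree is bounded by $d \cdot \deg_t Q + (d-1)\cdot \deg_t E < d\cdot h + (d-1)h < 2dh$ — more carefully, $\deg_t \Resultant_y(E,Q) \leq h\deg_y Q + (\deg_t Q)\deg_y E \leq h(d-1) + (h-1)d < 2dh$. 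On the other hand, $\Resultant_y(E,Q)$ is (up to a unit in $\gfield[t]$, namely a power of $\lcoeff_y(E)$) the product $\prod_i Q(t, f_i(t))$ over all roots $f_i$ of $E$ in an algebraic closure of $\gfield((t))$ — or, staying inside $\gfield((t))$, it is divisible in $\gfield((t))$ by $Q(t,f(t))$ times the analogous evaluations at the conjugate branches. In any case one gets $v_t\big(\Resultant_y(E,Q)\big) \geq v_t\big(Q(t,f(t))\big) \geq 2dh$, while $\deg_t \Resultant_y(E,Q) < 2dh$ — a nonzero polynomial cannot have $t$-adic valuation exceeding its degree. Contradiction, so $Q = 0$.

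The main obstacle is making the resultant-valuation inequality fully rigorous: one needs that $Q(t,f(t))$, viewed in $\gfield((t))$, "divides" the resultant in an appropriate sense, which requires either passing to the integral closure of $\gfield[[t]]$ in a splitting field of $E$ over $\gfield((t))$ (a finite extension of $\gfield((t^{1/e}))$ by Puiseux/Hensel theory, where the valuation extends) and writing $\Resultant_y(E,Q) = \lcoeff_y(E)^{\deg_y Q} \prod_i Q(t, \tilde f_i)$ with each $v_t\big(Q(t,\tilde f_i)\big) \geq 0$ — here $\lcoeff_y(E) = \lcoeff_t E$ up to sign and has $v_t = 0$ when suitably normalized — or else arguing more elementarily: $Q(t,f(t)) E_y(t,f(t))^{-1}$ and its conjugates... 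Actually the slicker argument avoids conjugates entirely: from $v_t(Q(t,f(t))) \geq 2dh$ and $E(t,f(t)) = 0$, run the extended Euclidean algorithm in $\Kfield[y]$ to get $A E + B Q = \Resultant_y(E,Q)$ with $A, B \in \gfield[t][y]$ of controlled degrees; evaluating at $y = f(t)$ gives $\Resultant_y(E,Q) = B(t,f(t)) \cdot Q(t,f(t))$ in $\gfield((t))$ up to clearing denominators, so after multiplying through by a suitable power $t^{?}$-free denominator one still concludes $v_t\big(\Resultant_y(E,Q)\big) \geq v_t\big(Q(t,f(t))\big) \geq 2dh$, provided the denominators introduced are coprime to $t$ — which holds because $\lcoeff_y(E)(0) \neq 0$ after a harmless normalization, or can be circumvented by working with the Sylvester matrix directly. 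I expect the bookkeeping on exactly which degree bound ($2dh$ versus a slightly smaller quantity) is needed to be the only delicate point, and it comfortably fits.
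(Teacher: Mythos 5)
Your argument is correct and is essentially the paper's own proof: the paper also bounds $\deg_t \Resultant_y(E,Q)$ by $d(h{-}1)+h(d{-}1)<2dh$ and uses the Bézout identity $Eu+Qv=\Resultant_y(E,Q)$ with $u,v\in\gfield[t,y]$, evaluated at $y=f(t)$, to force the resultant to vanish. Your worry about denominators in the cofactors is unfounded — the Sylvester-matrix construction already yields polynomial cofactors in $\gfield[t][y]$ — so the ``slicker argument'' you settle on goes through exactly as the paper writes it.
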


\begin{proof}
The resultant $r(t)$ of $E(t,y)$ and $Q(t,y)$ with respect to $y$
is a polynomial of degree at most $d(h{-}1) + h(d{-}1)$. On the other 
hand, we have a Bézout relation 
$$E(t,y) \: u(t,y) + Q(t,y) \: v(t,y) = r(t),$$
where $u(t,y)$ and $v(t,y)$ are bivariate polynomials in $\gfield[t,y]$. 
By evaluating the previous equality at $y=f(t)$ it follows that 
$$r(t) \equiv Q(t,f(t)) \: v(t,f(t)) \equiv 0 \pmod{t^{2dh}}$$
holds in $\gfield((t))$, and therefore $r=0$. Thus $E$ and $Q$ have a non-trivial common factor; 
since $E$ is irreducible, it must divide~$Q$.
But  $\deg_y Q < \deg_y E$, so $Q=0$.
\end{proof}

Solving Eq.~\eqref{BCCD:eq:stability-bis-trunc} amounts to solving a
\emph{Hermite--Padé approximation} problem. In terms of linear algebra, it
translates into solving a linear system over $\gfield$ in the coefficients of
the unknown polynomials~$b_i$. This system has~$dh$ unknowns and $N=2dh$
linear equations. Moreover, it has a very special shape: it has a
quasi-Toeplitz structure, with displacement rank $\Delta=O(d)$. Therefore, it
can be solved using fast algorithms for structured
matrices~\cite{Pan01,BoJeSc08} in $\softO(\Delta^{\omega-1}N) =
\softO(d^{\omega}h)$ operations in~$\gfield$. These algorithms first compute a
(quasi)-inverse of the matrix encoding the homogenous part of the system,
using a compact data-structure called displacement generators (or, $\Sigma LU$
representation); then, they apply it to the vector encoding the inhomogeneous
part. The first step has complexity $\softO(\Delta^{\omega-1}N) =
\softO(d^{\omega}h)$, the second step has complexity $\softO(\Delta N) =
\softO(d^{2}h)$.

In our setting, we will need to solve $\log N$ systems of this type, each
corresponding to the current digit of~$N$ in radix~$p$. An important feature
is that these systems share the same homogeneous part, which only depends on
the coefficients of the power series $s_j(t) = \frac{f^j}{E_y(t,f(t))}$
occurring on the right-hand side of~\eqref{BCCD:eq:stability-bis-trunc}. Only
the inhomogeneous parts vary: they depend on the linear combination
$\sum_{i=0}^{d-1} a_i(t) s_i(t)$. Putting these facts together yields
Algorithm~\ref{BCCD:algo:Nth-coefficient-via-Hermite-Pade} and the following
complexity result.

\begin{algo}
  Algorithm \textbf{Nth coefficient \emph{via} Hermite-Padé.}
  \caption{\label{BCCD:algo:Nth-coefficient-via-Hermite-Pade}}
  \begin{algoenv}
    {A polynomial $E(t,y) = e_d(t)y^d + \dotsb + e_0(t)$ and a truncation $g = f_0 + \dotsb + \bigO(t^{\rho + 1})$ of a series~$f$ such that $E(t,g) = \bigO(t^{\rho + 1})$.}
    {The $N$th coefficient~$f_N$ of the series $f$.}
    \State 1. Precompute the first $2pdh$ coefficients of the series expansions~$s_j$ of~$f(t)^j/E_y(t,f)$, $0 \leq j < d$.
    \State 2. Precompute the quasi-inverse of the Toeplitz matrix corresponding to the Hermite--Padé approximation problem.
    \State 3. Expand $N = (N_{\ell-1}\ldots N_0)_p$ with respect to the radix~$p$.
    \State 4. Set $g = y \in L$ written as
      $\textstyle E_y^{-1} \cdot \big({-} d e_0 - (d{-}1) e_1 y - \cdots - e_{d-1} y^{d-1}\big).$
    \State 5. For $i = 0, 1, \ldots, \ell - 1$,
      \begin{enumerate}
        \item write~$g = P(t,f)/E_y(t,f)$ as a linear combination of the~$s_j$'s,
        \item compute the section $\AlgSOp{N_i}(g)$ at precision~$\bigO(t^{2dh})$,
        \item recover~$Q$ such that $\AlgSOp{N_i}(g) = Q/E_y$ by Hermite--Padé,
        \item redefine $g$ as $Q/E_y$
      \end{enumerate}
    \State 6. Replace $y$ by $\bar f(t)$ in $g$ and call $\bar g(t)$ the obtained result.
    \State 7. Expand $\bar g(t)$ at precision $\bigO(t)$.
    \State 8. Set $\bar g_0$ to the constant coefficient of~$\bar g(t)$.
    \State 9. Return $\bar g_0^{p^\ell}$.
  \end{algoenv}
\end{algo}

\begin{theo}
\label{thm:Nth-via-Hermite-Pade}
Let~$\gfield$ be a perfect field with characteristic~$p > 0$. 
Let~$E(t,y)$ be an irreducible polynomial in $\gfield[t,y]$ of
height~$h$ and degree~$d$. 
We assume that we are given a nonnegative integer $\rho$ and a 
polynomial $\bar f(t)$ such that $E(t,\bar f(t)) \equiv 0 
\pmod{t^{2\rho+1}}$ and $E_y(t,\bar f(t)) \not\equiv 0 
\pmod{t^{\rho+1}}$.

There there exists a unique series $f(t)$ congruent to $\bar f(t)$ 
modulo $t^{\rho+1}$ for which $E(t, f(t)) = 0$.
Moreover, Algorithm \ref{BCCD:algo:Nth-coefficient-via-Hermite-Pade} 
computes the $N$th 
coefficient of $f$ for a cost of $\softO(d^2 h p + d^\omega h) + 
\bigO(d^2 h^2 \log N)$ operations in~$k$.
\end{theo}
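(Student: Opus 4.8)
The plan is to establish the two assertions of the theorem separately: first the existence and uniqueness of the lift $f$, then the correctness and complexity of Algorithm~\ref{BCCD:algo:Nth-coefficient-via-Hermite-Pade}.

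\textbf{Existence and uniqueness of $f$.} I would argue by Newton iteration (Hensel lifting). Write $g_0 = \bar f(t)$. The hypotheses say $v_t(E(t,g_0)) \geq 2\rho+1$ while $v_t(E_y(t,g_0)) = \nu \leq \rho$. A standard quantitative Hensel lemma then produces a unique $f \in k[[t]]$ with $E(t,f) = 0$ and $f \equiv g_0 \pmod{t^{\rho+1}}$: at each step one replaces $g$ by $g - E(t,g)/E_y(t,g)$, and the gap $v_t(E(t,g)) - 2v_t(E_y(t,g))$ at least doubles at each iteration while $v_t(E_y(t,g)) = \nu$ stays constant. Uniqueness among series congruent to $\bar f$ mod $t^{\rho+1}$ follows because two such roots $f, \tilde f$ satisfy $E_y(t,f)(f - \tilde f) \equiv 0$ modulo a high power of $t$ (expand $E(t,\tilde f) - E(t,f)$), and $v_t(E_y(t,f)) = \nu$ is finite. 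I would also note that $\rho$ can be taken $\leq v_t(\Resultant_y(E,E_y)) \in O(dh)$, so $\rho = O(dh)$ throughout.

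\textbf{Correctness of the algorithm.} This is where Theorem~\ref{BCCD:thm:stability} does the heavy lifting. Formula~\eqref{BCCD:eq:basic-formula-fN} reduces computing $f_N$ to evaluating $(\AlgSOp{N_{\ell-1}}\cdots\AlgSOp{N_0} f)(0)$ and raising to the power $p^\ell$; since $f = \eval{f}(y)$ and $\eval f$ commutes with the sections, it suffices to track the element $\AlgSOp{N_{i}}\cdots\AlgSOp{N_0}(y) \in L$, represented in the modified basis $\basis = (y^j/E_y)_{0 \le j < d}$. Theorem~\ref{BCCD:thm:stability} guarantees this representation stays in $k[t,y]_{<h,<d}/E_y$, and the Lemma preceding the algorithm guarantees that the truncated Hermite--Padé system~\eqref{BCCD:eq:stability-bis-trunc} at precision $t^{2dh}$ pins down $Q$ uniquely, so Step~5 is well-defined and correct. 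I would verify the initialization in Step~4: $E_y = \sum_{i=1}^d i\,e_i y^{i-1}$, and $E(t,y) = 0$ in $L$ rearranges to $y \cdot (\text{something}) $; more simply, $-e_d y^{d-1} \cdot y = e_{d-1}y^{d-1} + \cdots + e_0$ is not quite it — rather one writes $y \cdot E_y$ in $L$ and reduces, but the displayed formula $y = E_y^{-1}(-de_0 - (d-1)e_1 y - \cdots - e_{d-1}y^{d-1})$ should be checked against $y E_y - (\text{that polynomial}) \in (E)$, using $y^d \equiv -e_d^{-1}(e_{d-1}y^{d-1}+\cdots+e_0)$. Steps~6--9 just apply $\eval f$ (needing $f$ mod $t^{\rho+1+O(\text{loss})}$, hence the truncation $\bar f$ suffices since the division by $E_y$ loses only $\nu \le \rho$ digits) and the final Frobenius power.

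\textbf{Complexity analysis.} I would go through the steps in order. Step~1: computing the first $O(pdh)$ coefficients of $f$ via Newton iteration costs $\softO(pdh)$, and then forming the $d$ series $s_j = f^j/E_y(t,f)$ costs $\softO(d \cdot pdh) = \softO(d^2 h p)$ operations in $k$. Step~2: the Hermite--Padé matrix has size $N = 2dh$ and quasi-Toeplitz structure with displacement rank $\Delta = O(d)$, so its quasi-inverse (in $\Sigma LU$ form) is computed in $\softO(\Delta^{\omega-1} N) = \softO(d^\omega h)$, as recalled in the text. Step~3 is $O(\log N)$. Step~4 is $O(d)$. The loop in Step~5 runs $\ell = O(\log N)$ times; each iteration: (a) re-expressing $g$ as $\sum a_i s_i$ at precision $t^{2dh}$ is $O(d \cdot dh) = O(d^2 h)$; (b) applying one section operator $\AlgSOp r$ to a series known mod $t^{2dh}$ is $\softO(dh)$; (c) applying the precomputed quasi-inverse to the new right-hand side is $\softO(\Delta N) = \softO(d^2 h)$; (d) is free. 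So the loop costs $\softO(d^2 h^2 \log N)$ — here the factor $h^2$ appears because... actually each section/Hermite--Padé step is $\softO(d^2h)$ and there are $\log N$ of them, giving $\softO(d^2 h \log N)$; the extra $h$ comes from working with the polynomial coefficients of size $h$ in step (a), where reconstructing $P(t,y) \in k[t,y]_{<h,<d}$ and forming the linear combination genuinely involves $O(d^2 h^2)$ field operations if done by naive polynomial arithmetic, consistent with the claimed $O(d^2 h^2 \log N)$. Steps~6--8: evaluating $\bar g(t)$ mod $t$, i.e. plugging $\bar f$ into a polynomial of bidegree $(<h,<d)$ and dividing by $E_y$, costs $O(dh)$ (or $\softO$ thereof). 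Step~9: a single application of the Frobenius power, counted as $O(1)$ (or removable, per the Remark). Summing: $\softO(d^2hp + d^\omega h) + O(d^2 h^2 \log N)$, as claimed.

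\textbf{Main obstacle.} The delicate point is not any single estimate but making the bookkeeping of \emph{precision and degree bounds} airtight: one must check that truncating everything at $t^{2dh}$ in Step~5(b)--(c) is consistent across all $\ell$ iterations (the section operator is only "$p$-contracting" on valuations, and one must confirm no precision is lost beyond what the Lemma tolerates), that the number of precomputed coefficients of $f$ in Step~1 — namely $O(pdh)$ — is exactly what is needed to get the $s_j$ to precision $t^{2dh}$ after a section (since $\AlgSOp r$ divides indices by $p$, one needs $\sim 2pdh$ input coefficients, hence the "$2pdh$" in the algorithm), and that the quasi-Toeplitz structure with $\Delta = O(d)$ genuinely persists for the matrix built from the $s_j$. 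Each of these is routine but collectively they are the crux; I would isolate the precision claim as the one meriting an explicit short argument.
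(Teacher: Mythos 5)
Your proposal is correct and follows essentially the same route as the paper: Hensel's lemma for existence/uniqueness, Theorem~\ref{BCCD:thm:stability} plus the uniqueness lemma for correctness of the Hermite--Padé reconstruction, and the same cost accounting ($\softO(d^2hp)$ for the $s_j$'s to precision $t^{2dhp}$, $\softO(d^\omega h)$ for the quasi-inverse, and $O(d^2h^2)$ per digit for extracting the coefficients of $t^{pj+r}$ from $\sum_i a_i s_i$ and for the naive matrix--vector product). The precision worry you flag at the end is resolved by noting that each loop iteration returns an \emph{exact} element $Q/E_y$ of $L$, so no truncation error accumulates across the $\ell$ iterations.
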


\begin{proof}
The first assertion is Hensel's Lemma~\cite[Th.~7.3]{Eisenbud1995}.

The precomputation of $s_j(t) = \frac{f(t)^j}{E_y(t,f(t))}$ modulo $t^{2dhp}$
for $0\leq j < d$ can be performed using Newton iteration, for a total cost of
$\softO(d^2hp)$ operations in $k$. As explained above, this is enough to set up the homogeneous part of the quasi-Toeplitz system; its inversion has cost
$\softO(d^{\omega}h)$.

Let us turn to the main body of the computation, which depends on the
index~$N$. For each $p$-digit $r=N_i$ of $N$, we first construct the
inhomogeneous part of the system. For this, we extract the coefficients of
$t^{pj+r}$ in $\sum_{i=0}^{d-1} a_i(t) s_i(t)$, for $0\leq j <d$, for a 
total cost of $O(d^2h^2)$ operations in $k$. We then apply the inverse 
of the system to it, for a cost of $O(d^2h^2)$ (using a naive matrix 
vector multiplication\footnote{One can actually achieve this step for
a cost of $\softO(d^2 h)$ operations in~$k$ using the quasi-Toeplitz 
structure; however this is not that useful since the cost of the previous 
step was already $O(d^2 h^2)$.}). This is done $\ell \approx \log N$ 
times. The other steps of the algorithm have negligible cost.
\end{proof}

\section{Improving the complexity with respect to $p$}
\label{BCCD:sect:recurrences}

As shown in Theorem \ref{thm:Nth-via-Hermite-Pade}, Algorithm 
\ref{BCCD:algo:Nth-coefficient-via-Hermite-Pade} has a nice complexity 
with respect to the parameters $d$, $h$ and $\log N$: it is polynomial 
with small exponents. However, the complexity with respect to $p$ is 
not that good, as it is exponential in $\log p$, which is the 
relevant parameter. Thus, when $p$ is large (say $> 10^5$), 
Algorithm~\ref{BCCD:algo:Nth-coefficient-via-Hermite-Pade} runs slowly 
and is no longer usable.

For this reason, it is important to improve the complexity with respect
to $p$. In this section, we introduce some ideas to achieve this.
More precisely, our aim is to design an algorithm whose complexity with 
respect to $p$ and $N$ is $\softO(\sqrt p) \cdot \log N$, and remains
polynomial in all other relevant parameters.
In the current state of knowledge, it seems difficult to decrease further
the exponent on $p$; indeed, the question addressed in this 
paper is related to other intensively studied questions (e.g., counting 
points \emph{via} $p$-adic cohomologies) for which the barrier 
$\softO(\sqrt p)$ has not been overcome yet.

\subsubsection*{Notations and assumptions}

We keep the notation of previous sections.
We make one additional hypothesis: \emph{the ground field $k$ is a finite field}. 
We assume that $k$ is 
represented as $(\Z/p\Z)[X]/\pi(X)$ where $\pi$ is an irreducible monic 
polynomial over $\Z/p\Z$ of degree $s$.
We choose a monic polynomial $\hat\pi \in \Z[X]$ of degree $s$ lifting 
$\pi$. We set $W = \Z_p[X]/\hat \pi(X)$ where $\Z_p$ is the ring of 
$p$-adic integers.

The algorithm we are going to design is not algebraic in the sense that it
does not only perform algebraic operations in the ground field~$k$, but will
sometimes work over $W$ (or, more exactly, over finite quotients of $W$). For
this reason, throughout this section, 
we will use bit complexity instead of algebraic complexity.

We use the notation $\poly(n)$ to indicate a quantity whose growth is at
most polynomial in $n$. The precise result we will prove reads as follows.

\begin{theo}
\label{theo:Nth-via-recurrence}
Under the assumptions of Theorem~\ref{thm:Nth-via-Hermite-Pade}
and the above extra assumptions, there exists an algorithm of bit
complexity $\poly(dh) \softO(s \sqrt p) \log N$ that computes the
$N$th coefficient of $f$.
\end{theo}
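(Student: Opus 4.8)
The starting point is Algorithm~\ref{BCCD:algo:Nth-coefficient-via-Hermite-Pade} and Theorem~\ref{thm:Nth-via-Hermite-Pade}: the whole $N$-dependent part of the computation is a product of $\ell \approx \log_p N$ linear maps (the matrices of the section operators $\AlgSOp{N_i}$ in the ``modified monomial basis'' $\basis$), applied to a fixed starting vector, followed by a final evaluation at $f$ and a Frobenius twist.  Each such matrix has entries that are polynomials in $t$ of degree $O(dh)$ with coefficients in $k$, so each matrix–vector product costs $\poly(dh)$ operations in $k$, hence $\poly(dh)\,\softO(s)$ bit operations --- already giving a $\poly(dh)\,\softO(s)\,\log N$ algorithm, but with a hidden linear dependency in $p$ coming from the precomputation of the section matrices via the Frobenius inverse (Step~2 costs $\softO(d^2hp+d^\omega h)$ operations in $k$, i.e.\ a factor $p$).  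The goal is to replace that $p$ by $\softO(\sqrt p)$ at the cost of working over the unramified lift $W$ of $k$ rather than over $k$ itself.

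**Key steps.**  First I would reinterpret the action of a single section operator $\AlgSOp{r}$ in the basis $\basis$ as the extraction of an arithmetic subsequence of the coefficients of the relevant power series, followed by a linear recombination governed by $E$ and $E_y$ --- concretely, computing $\AlgSOp{r}(P/E_y) = Q/E_y$ amounts, as in Eq.~\eqref{BCCD:eq:stability-bis-trunc}, to reading off the coefficients of $t^{pj+r}$ in $\sum_i a_i(t)\,s_i(t)$ for $0 \le j < d$ and then solving the \emph{fixed} Hermite--Padé system.  So the only place where $p$ enters multiplicatively is the \emph{precomputation} of enough coefficients of the series $s_j(t) = f^j/E_y(t,f)$: one needs them up to index roughly $2dh\cdot p$.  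The idea imported from Chudnovsky--Chudnovsky~\cite{ChudnovskyChudnovsky88} is that the sequence of coefficients of an algebraic series satisfies a linear recurrence with polynomial coefficients (it is $P$-recursive): differentiating $E(t,f)=0$ gives $E_y(t,f)\,f' = -E_t(t,f)$, and combining with the minimal polynomial relation yields a linear ODE of order $<d$ and polynomial coefficients of degree $O(dh)$ satisfied by $f$, hence by each $s_j$.  One then computes the single coefficient $s_j$ of index $\Theta(dhp)$ --- or rather, the $O(dh)$ consecutive coefficients needed to restart the recurrence at each of the $O(dh)$ relevant residues --- in bit complexity $\poly(dh)\,\softO(\sqrt{p}\,)\cdot\softO(s)$ by the baby-step/giant-step method for evaluating the matrix factorial of the companion matrix of the recurrence at an arithmetic progression of points, exactly as in the characteristic-zero $\softO(\sqrt N)$ algorithm.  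The catch is that this recurrence has coefficients that vanish modulo $p$ at inconveniently many points (the denominators are exactly the troublesome factors), so the matrix-factorial computation must be lifted to $W/p^{\kappa}W$ for a suitable precision $\kappa = O(\log_p(dhp)) = O(1 + \log_p(dh))$ and only reduced mod $p$ at the end; this is why the algorithm is no longer purely algebraic and why we measure bit complexity.  Finally, assemble: precompute the section matrices modulo $t^{2dh}$ from these coefficients and the Hermite--Padé solve (cost $\poly(dh)\,\softO(s\sqrt p\,)$ bit operations, done once), then run the $\ell$-fold product of Step~5 (cost $\poly(dh)\,\softO(s)$ per step, $\ell = O(\log N / \log p)$ steps), and conclude with the evaluation at $\bar f$ and the $p^\ell$-th power.

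**Main obstacle.**  The delicate point is the interaction between $p$-adic precision loss and the baby-step/giant-step matrix-factorial evaluation.  When one evaluates $\prod_{k} A(k)$ over $\Z/p^\kappa\Z$ for a companion matrix $A$ of a recurrence whose leading coefficient $\ell c(t)$ has zeros modulo $p$, the naive reduction modulo $p$ loses all information at those indices; one must instead track valuations carefully and choose $\kappa$ large enough (but still $O(1)$ in the relevant regime, since $dhp$ is only polynomially large and the number of ``bad'' steps along the progression is $O(dh)$) so that the end result is correct modulo $p$.  Making this precision bookkeeping rigorous --- showing that $\kappa = O(\log_p(dh) + 1)$ suffices and that it does not inflate the bit size of the entries beyond $\poly(dh)\cdot\softO(\log p)$ --- is the technical heart of the argument; everything else is a repackaging of Theorem~\ref{BCCD:thm:stability}, Algorithm~\ref{BCCD:algo:Nth-coefficient-via-Hermite-Pade}, and the classical $\softO(\sqrt N)$ technique.
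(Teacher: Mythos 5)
Your overall architecture matches the paper's: keep Algorithm~\ref{BCCD:algo:Nth-coefficient-via-Hermite-Pade}, observe that the only $\Theta(p)$ bottleneck is the computation of the coefficients of index $r+pj$, $0\le j<2dh$, of $g=P(t,f)/E_y(t,f)$, derive a linear recurrence with polynomial coefficients from a differential equation satisfied by $g$, evaluate the corresponding matrix factorial by the Chudnovsky--Chudnovsky baby-step/giant-step method in $\softO(\sqrt p)$ operations, and lift to $W$ to cope with leading coefficients that vanish modulo $p$. But there is a genuine gap exactly at the point you flag as the ``technical heart''. Lifting to $W$ and tracking valuations cannot work in general: the leading coefficient of the recurrence can vanish \emph{exactly} in $W$ at some index of the progression (the paper points this out explicitly), and then no choice of precision $\kappa$ rescues the unrolling --- the value of the next coefficient is simply not determined by its predecessors. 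The paper's way out is structural, not a matter of bookkeeping: it normalizes the recurrence so that its leading coefficient is the \emph{constant} $\hat a_d(0)$, introduces the ordinarity hypothesis \Htwo{} under which this constant is a unit of $W$, and reduces the general case to the ordinary one by a change of origin $t\mapsto u+\alpha$ with $\alpha$ chosen so that \Hone{} and \Htwo{} hold there. That reduction is itself nontrivial, because the Taylor expansion (and hence the sections) at $\alpha$ has a priori nothing to do with the one at $0$; the key fact (Proposition~\ref{prop:SrtSru} and Corollary~\ref{coro:SrtSru}) is that $\AlgSOp{p-1}$ \emph{does} commute with translation, so that $\AlgSOp r(g)=\tau_\alpha^{-1}\circ\AlgSOp{p-1,u}\circ\tau_\alpha(t^{p-1-r}g)$. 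Your proposal contains no substitute for this step, nor for the preliminary reduction (splitting $g=g_1+g_2$ into ``good'' summands) needed when the leading coefficient of the differential equation vanishes identically modulo $p$.

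Separately, your precision estimate is off even in the favourable case: the loss is not $O(\log_p(dh))$ but $\Theta(dh)$, because recovering $g_n$ from the factorial-normalized coefficient $\tilde g_n$ for $n\approx 2dhp$ requires dividing by $n!$, whose $p$-adic valuation is about $2dh$; the paper accordingly works over $W/p^{2dh+1}W$. This does not change the asserted complexity class (each operation in $W/p^{2dh+1}W$ still costs $\poly(dh)\,\softO(s\log p)$ bit operations), but it is a different and larger precision than the one you propose to prove sufficient.
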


If $p$ is bounded by a (fixed) polynomial in $d$ and $h$, then Theorem 
\ref{theo:Nth-via-recurrence} has been proved already. In the sequel, we 
will then always assume that $p \gg d,h$.

\subsubsection*{Overview of the strategy}

We reuse the structure of Algorithm 
\ref{BCCD:algo:Nth-coefficient-via-Hermite-Pade} but speed up the 
computation of the $\AlgSOp {N_i}(g)$'s. Precisely, in Algorithm 
\ref{BCCD:algo:Nth-coefficient-via-Hermite-Pade}, the drawback was the 
computation of the $\frac{f^j}{E_y(t,f)}$'s or, almost equivalently, the 
computation of $g = \frac{P(t,f)}{E_y(t,f)}$ at sufficient precision. 
However, only a few (precisely $2dh$) coefficients of $g$ are needed, 
since we are only interested in one of its sections.
A classical method for avoiding this overhead is to find a (small) 
recurrence on the coefficients on $g = \sum_{n=0}^\infty g_i t^i$ of 
the form:
\begin{equation}
\label{eq:recurrence}
b_r(i) g_{i+r} + b_{r-1}(i) g_{i+r-1} + \cdots + b_1(i) g_{i+1} +
b_0(i) g_i = 0.
\end{equation}
We then unroll it using matrix factorials (for which fast 
algorithms are available in the literature {\cite{ChudnovskyChudnovsky88}}
Unrolling the recurrence is straightforward as soon as the 
leading coefficient $b_r(i)$ does not vanish. On the contrary, when 
$b_r(i) = 0$, the value of $g_{i+r}$ cannot be deduced from the previous 
ones. Unfortunately, it turns out that $b_r(i)$ does sometimes vanish in 
our setting.

We tackle this issue by lifting everything over $W$ and performing all 
computations over this ring. Divisions by $p$ then become possible but 
induce losses of precision. 
We then need to control the $p$-adic valuation of the denominators, that 
are the $p$-adic valuations of the $b_r(i)$'s. We cannot
expect to have a good control on them in full generality; even worse, we 
can build examples where $b_r(i)$ vanishes in $W$ for some $i$.
There exists nevertheless a good situation---the so-called \emph{ordinary
case}---where we can say a lot on the $b_r(i)$'s.
With this extra input, we are able to lead our strategy to its end.

The general case reduces to the ordinary one using a change of origin, 
\emph{i.e.} replacing $t$ by $u{+}\alpha$ for some $\alpha \in k$. This 
change of origin does not seem to be harmless \emph{a priori}. Indeed 
the Taylor expansion of $g$ around $\alpha$ (the one we shall compute) 
has in general nothing to do with the Taylor expansion of $g$ around $0$ 
(the one we are interested in). The sections are nevertheless closely 
related (see Proposition \ref{prop:SrtSru}). This ``miracle'' is quite 
similar to what we have already observed in Proposition 
\ref{BCCD:prop:commResSec} and again can be thought of as an avatar of 
the Cartier operator.

\subsection{From algebraic equations to recurrences}

We consider a bivariate polynomial $P(t,y) \in k[t,y]$ with 
$\deg_t P < h$ and $\deg_y P < d$. We fix moreover an integer~$r$ is the 
range $[0, p{-}1]$. Our aim is to compute 
$\AlgSOp r \!\big(\frac {P(t,f)}{E_y(t,f)}\big)$ at 
precision $O(t^{2dh})$.
Set $g = \frac{P(t,f)}{E_y(t,f)}$ and write $g = \sum_{i=0}^\infty
g_i t^i$. By definition $\AlgSOp r (g) = \sum_{j=0}^\infty g_{r+pj}^{1\hspace{-0.2ex}/\hspace{-0.2ex}p} t^j$, so that
we have to compute the coefficients $g_{r+pj}$ for $j < 2dh$.

We let $L$ be the leading coefficient of $E(t,y)$ and $R$ be the 
resultant of $E$ and $E_y$.
To begin with, we make the following assumption (which will be
relaxed in \S \ref{ssec:reducordinary}):

\medskip

\noindent
\Hone:
Both $L$ and $R$ have $t$-adic valuation $0$.

\medskip

As explained above, we now lift the situation over $W$.
We choose a polynomial $\hat E \in W[t,f]$ of bidegree $(h,d)$ lifting 
$E$. We define $\hat E_t = \frac{\partial \hat E}{\partial t}$, $\hat 
E_y = \frac{\partial \hat E}{\partial y}$. The assumption \Hone\ implies 
that the series $f$ lifts uniquely to a series $\hat f \in W[[t]]$ such 
that $\hat E(t,\hat f) = 0$.
We define $\hat L$ as the leading coefficient of $\hat E(t,y)$
and set $\hat R = \Resultant(\hat E, \hat E_y)$.
We introduce the ring $W_K = W[t, (\hat L\hat R)^{-1}]$.
By \Hone, $W_K$ embeds canonically into $W[[t]]$.
We pick a polynomial $\hat P \in W[t,y]$ lifting $P$ such that
$\deg_t \hat P < h$ and $\deg_y \hat P < d$. We set $\hat g = \hat P
(t,\hat f)$.

We now compute a linear differential equation satisfied by $\hat g$. For this, 
we observe that the derivation $\frac \partial{\partial t} : W[[t]] \to 
W[[t]]$ stabilizes the subring $W_L = W_K[\hat f]$.
Indeed from the relation $\hat E(t,\hat f) = 0$, we deduce that
$\frac{\partial \hat f}{\partial t} 
= - \frac{\hat E_t(t,\hat f)}{\hat E_y(t,\hat f)}$.
Thus $\frac{\partial \hat f}{\partial t} \in W_L$ because
$\hat E_y(t, \hat f)$ is invertible in $W_L$ thanks to \Hone. Using 
additivity and the Leibniz relation, we finally deduce that $\frac 
{\partial}{\partial t}$ takes $W_L$ to itself. In particular, all the 
successive derivatives of $\hat g$ lie in $W_L$.
On the other hand, we notice that $W_L$ is free of rank $d$ over $W_K$ 
with basis $(1, \hat f, \ldots, \hat f^{d-1})$.
Let $M$ be the $d \times d$ matrix whose $j$th column (for $0 \leq j < 
d$) contains the coordinates of $\frac{\partial^j\hat g}{\partial t^j}$ 
with respect to the above basis. Similarly let $C$ be the column vector 
whose entries are the coordinates of $\frac{\partial^d\hat g}{\partial
t^d}$. Let $\Delta_d = \det M$. We solve the system $M X = C$ using 
Cramer's formulae and find this way a linear differential equation of 
the form:
$$\Delta_d \frac {\partial^d \hat g}{\partial t^d} + 
\Delta_{d-1} \frac {\partial^{d-1} \hat g}{\partial t^{d-1}} + 
\cdots + \Delta_1 \frac {\partial \hat g}{\partial t} + 
\Delta_0 \hat g = 0,$$
where the other $\Delta_i$'s are defined as determinants as well.
In particular, they all lie in $W_K$. Multiplying by the appropriate
power of $\hat L \hat R$, we end up with a differential equation of
the form:
\begin{equation}
\label{eq:diffeqhatg}
\hat a_d \frac {\partial^d \hat g}{\partial t^d} + 
\hat a_{d-1} \frac {\partial^{d-1} \hat g}{\partial t^{d-1}} + 
\cdots + \hat a_1 \frac {\partial \hat g}{\partial t} + 
\hat a_0 \hat g = 0
\end{equation}
where the $\hat a_i$'s are polynomials in $t$.
We can even be more precise.
Indeed, following the above constructions, we find that all entries of 
$M$ and $C$ are rational functions whose degrees (of numerators and 
denominators) stay within $\poly(dh)$. We then deduce that the degrees 
of the $\hat \Delta_i$'s and $\hat a_i$'s are in $\poly(dh)$ as well. 
Furthermore, they can be computed for a cost of $\poly(dh)$ operations 
in $k$, that is $\poly(dh) \softO(s \log p)$ bit operations (recall that 
$s$ denotes the degree of $k$ over $\GField{p}$)

We write $\hat g = \sum_{i=0}^\infty \tilde g_i \frac{t^i}{i!}$.
The differential equation \eqref{eq:diffeqhatg} translates to a 
recurrence relation on the $\tilde g_i$'s of the form:
\begin{equation}
\label{eq:rechatg}
\forall n \geq r, \quad
\tilde b_0(n) \tilde g_n + \tilde b_{1}(n) \tilde g_{n-1} +
\tilde b_2(n) \tilde g_{n-2} + \cdots + \tilde b_r(n) \tilde g_{n-r} = 0
\end{equation}
where the $\tilde b_i$'s are polynomials in~$n$ over $W$ whose degrees 
are in $\poly(dh)$. Moreover $r$ is at most 
$d + \max_i \deg \hat a_i$. In particular, $r \in \poly(dh)$. 
Finally it is easy to write down explicitly $\tilde b_0$: it is 
the constant polynomial with value $\hat a_d(0)$.

\subsection{The ordinary case}
\label{ssec:ordinary}

In order to take advantage of Eq.~\eqref{eq:rechatg}, we make the 
following extra assumption, corresponding to the so-called 
\emph{ordinary case}:

\medskip

\noindent
\Htwo:
$\hat a_d(0)$ does not vanish modulo $p$.

\medskip

\noindent
Under \Htwo, $\tilde b_0(n) = \hat a_d(0)$ is invertible in $W$ and there is 
no obstruction to unrolling the recurrence \eqref{eq:rechatg}.
Let us be more precise.
We recall that we want to compute the values of $g_{r+pj}$ for $j$ up to 
$2dh$. Clearly $g_n$ is the reduction modulo $p$ of~$\frac{\tilde g_n}{n!}$. In order to get $g_{r+pj}$, we  need to 
compute $\tilde g_{r+pj}$ modulo $p^{v+1}$ where $v$ is the $p$-adic 
valuation of $(r+2dhp)!$. Under our assumption that $p$ is large enough compared
to $d$ and $h$, we get $v = 2dh$. We will then work over the finite
ring $W' = W/p^{2dh+1}W$.

We first compute the $r$ first coefficients of $\hat f$ modulo 
$p^{2dh+1}$ by solving the equation $\hat E(t,\hat f) = 0$ (using a 
Newton iteration for example).
Since $r \in \poly(dh)$, this computation can be achieved for a cost of 
$\poly(dh)$ operations in $W'$, that is $\poly(dh)\softO(s \log p)$ 
bit operations. We then build the companion matrix:
$$M(n) = \left( \begin{matrix}
& 1 \\
& & \ddots \\
& & & 1 \\
\frac{-\tilde b_r(n)}{\hat a_d(0)} & 
\frac{-\tilde b_{r-1}(n)}{\hat a_d(0)} & \cdots &
\frac{-\tilde b_1(n)}{\hat a_d(0)}
\end{matrix} \right) \in (W'[n])^{r \times r}.$$
Obviously,
$$\left( \begin{matrix} 
\tilde g_{n-r+1} & \tilde g_{n-r+2} & \cdots & \tilde g_n
\end{matrix} \right)^T =
M(n) \cdot M(n{-}1) \cdots M(r) \cdot
\left( \begin{matrix} 
\tilde g_0 & \tilde g_1 & \cdots & \tilde g_{r-1} 
\end{matrix} \right)^T,$$
and computing $\tilde g_n$ reduces to evaluating
the  matrix factorial $M(n) \cdot M(n{-}1) \cdots M(r)$. 
Using \cite{ChudnovskyChudnovsky88}, the latter can be
computed within $\poly(dh) \softO(\sqrt n)$ operations in $W'$, that
is $\poly(dh) \softO(\sqrt n \cdot s \log p)$ bit operations.
All in all, we find that the $g_{r+pj}$'s ($0 \leq j < 2dh$) can all 
be computed for a cost of $\poly(dh) \softO(s \sqrt p)$ bit
operations.

Plugging this input in Algorithm 
\ref{BCCD:algo:Nth-coefficient-via-Hermite-Pade}, we end up with an 
algorithm of bit complexity $\poly(dh) \softO(s \sqrt p) \log N$. 
Theorem \ref{theo:Nth-via-recurrence} is thus proved under the extra 
assumptions \Hone\ and \Htwo.

\subsection{Reduction to the ordinary case}
\label{ssec:reducordinary}

We finally explain how \Hone\ and \Htwo\ can be relaxed. 
The rough idea is to translate the origin at some point where these
two hypotheses hold simultaneously.

\subsubsection*{The case of complete vanishing}

Before proceeding, we need to deal with the case where the whole polynomial
$\hat a_d$ vanishes modulo $p$. This case is actually very special; this is
shown by the next lemma, whose proof relies on the fact 
that for a generic $g$,
the minimal-order (homogeneous) linear differential equation over $k(t)$ 
satisfied by~$g$ has order exactly~$d$~\cite{ChuChu86}.

\begin{lemma}
\label{lem:badg}
For a generic $g \in L = k(t)[y]/E(t,y)$, the reduction of $\hat a_d$ 
modulo $p$ does not vanish.
\end{lemma}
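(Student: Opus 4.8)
The plan is to unwind the construction of $\hat a_d$ and show that its vanishing modulo $p$ is equivalent to a degenerate behaviour of a characteristic-zero differential equation, which then cannot happen generically. Recall that $\hat a_d$ was obtained by multiplying $\Delta_d = \det M$ by a suitable power of $\hat L \hat R$, where $M$ is the $d \times d$ matrix whose $j$th column records the coordinates of $\partial^j \hat g / \partial t^j$ in the basis $(1, \hat f, \ldots, \hat f^{d-1})$ of $W_L$ over $W_K$. Since $\hat L \hat R$ has $t$-adic valuation $0$ by \Hone\ and is nonzero modulo $p$ (it lifts $L$ and $R$, which are units in $k[[t]]$ after possibly translating), the reduction of $\hat a_d$ modulo $p$ vanishes identically if and only if $\det M$ reduces to $0$ in $k(t)$, i.e. if and only if the vectors $g, g', \ldots, g^{(d-1)}$ are $k(t)$-linearly dependent, where $g$ now denotes the reduction $\bar{\hat g} = P(t,f) \in L$.

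So the first key step is this reduction: $\hat a_d \bmod p = 0 \iff 1, g, g', \ldots, g^{(d-1)}$ are linearly dependent over $k(t)$ inside $L$ $\iff$ $g$ satisfies a homogeneous linear differential equation over $k(t)$ of order strictly less than $d$. The second step is to invoke the cited result of Chudnovsky--Chudnovsky~\cite{ChuChu86}: for a generic element $g$ of the $d$-dimensional $k(t)$-vector space $L = k(t)[y]/E(t,y)$, the minimal-order monic linear differential operator over $k(t)$ annihilating $g$ has order exactly $d$ — this is simply the statement that the $k(t)$-span of $g$ under $\partial/\partial t$ is all of $L$, which holds precisely when $1, g, \ldots, g^{(d-1)}$ form a $k(t)$-basis, hence when $\det M \neq 0$. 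The non-generic $g$'s form the zero locus of the polynomial $\det M$ (viewed, say, in the coordinates of $g$ in the monomial basis with polynomial coefficients of bounded degree), which is a proper Zariski-closed subset since it is not identically zero. Combining the two steps gives: outside this proper closed subset, $\hat a_d$ does not vanish modulo $p$, which is exactly the assertion of the lemma.

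I would then spell out the genericity statement carefully, since ``generic $g$'' needs a precise meaning in positive characteristic: here $g$ ranges over $k[t,y]_{<h,<d}$ (the relevant parameter space for our application), and the claim is that the set of $g$ for which $\det M$ vanishes is contained in the zero set of a fixed nonzero polynomial in the $dh$ coordinates of $g$. To see that this polynomial is nonzero it suffices to exhibit one $g$ with $\det M \neq 0$ — for instance $g = y$ itself typically works, or more robustly one argues that $\det M$, as a function of $g$, is not the zero polynomial because already over the fraction field it computes a Wronskian-type determinant which is generically nonvanishing by~\cite{ChuChu86}. One subtlety worth a sentence: the Chudnovsky--Chudnovsky result is about annihilating operators over $k(t)$, and one must make sure the passage from ``$\partial$-span of $g$ is $d$-dimensional'' to ``$\det M \neq 0$'' is genuinely an equivalence; it is, because $\det M$ is precisely the determinant of the transition matrix from $(g, g', \ldots, g^{(d-1)})$ to the fixed basis $(1, f, \ldots, f^{d-1})$ expressed over $W_K$ and then reduced.

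The main obstacle I anticipate is not the algebra — which is essentially bookkeeping — but pinning down the correct notion of genericity and making sure the exceptional locus is genuinely a proper subvariety defined by data independent of $p$ (or at least behaving uniformly), since the whole point of the lemma is to feed into a reduction argument where we will translate the origin to land outside this locus. Concretely, the delicate point is guaranteeing that the defining polynomial of the bad locus is not identically zero \emph{for the field $k$ at hand}; relying on~\cite{ChuChu86} settles this, but one should state explicitly that the cited theorem applies over any field (in particular over our $k$ of characteristic $p$), and that ``generic'' there is compatible with ``generic'' in our parameter space $k[t,y]_{<h,<d}$. Once that compatibility is asserted, the proof is short.
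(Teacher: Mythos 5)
Your proposal follows exactly the route the paper itself takes — and the paper gives no more than a one-sentence sketch: it reduces the non-vanishing of $\hat a_d$ modulo $p$ to the fact that a generic $g$ satisfies no homogeneous linear differential equation over $k(t)$ of order less than $d$, citing \cite{ChuChu86} for the latter. Your unwinding of $\hat a_d$ via $\det M$ and the linear (in)dependence of $g, g', \ldots, g^{(d-1)}$ is correct (the stray ``$1$'' in one of your lists should be dropped, since the equation is homogeneous), and the subtleties you flag — the precise meaning of genericity and the validity of the cited result in characteristic $p$ (where the cyclic-vector argument needs $p>d$, consistent with the paper's standing assumption $p \gg d,h$) — are real but are left equally implicit in the paper.
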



We say that an element $g \in L$ is \emph{good} if the corresponding 
$\hat a_d$ does not vanish modulo $p$. Lemma \ref{lem:badg} ensures
that goodness holds generically. It then holds with high probability
since we have assumed that the ground field $k$ has a large cardinality.
Consequently, even if we were unlucky and $g$ was not good, we can 
produce with high probability a decomposition $g = g_1 + g_2$ where 
$g_1$ and $g_2$ are both good (just by sampling $g_1$ at random). Since 
moreover the section $\AlgSOp r$ is additive, we can recover $\AlgSOp r 
(g)$ as $\AlgSOp r (g_1) + \AlgSOp r (g_2)$.

For this reason, in what follows we will assume safely that $g$ is good.

\subsubsection*{Change of origin}

Let $\hat \alpha \in W$ be such that $\hat L(\hat \alpha) \not \equiv 0 
\pmod p$, $\hat R(\hat \alpha) \not \equiv 0 \pmod p$, $\hat a_d(\hat 
\alpha) \not \equiv 0 \pmod p$. Such an element exists (since $k$ is 
assumed to be large) and can be found for a cost of $\poly(dh)$ 
operations in $k$ (e.g., by enumerating its elements).

We denote by $\alpha \in k$ the reduction of $\hat \alpha$ modulo $p$
and assume that $\alpha \neq 0$ (otherwise, we are in the ordinary
case).
We perform the change of variable $\tau_\alpha: t \mapsto u{+}\alpha$. 
Note that $\tau_\alpha$ induces isomorphisms $k(t) \to k(u)$ and
$k(t)[y]/E(t,y) \to k(u)[y]/E(u{-}\alpha,y)$. Furthermore,
the polynomial $E(\alpha,y) = 0$ has $d$ simple roots in an algebraic 
closure of $k$. Let $f_{\alpha,0}$ be one of them. By construction, 
$f_{\alpha,0}$ lies in a finite extension $\ell$ of $k$ of degree at 
most $d$. Moreover, by Hensel's Lemma, $f_{\alpha,0}$ lifts uniquely to 
a solution:
$$f_\alpha = f_{\alpha,0} + f_{\alpha,1} u + \cdots + f_{\alpha,i} u^i + 
\cdots \in \ell[[u]]$$
to the equation $E(u{-}\alpha, y) = 0$. We emphasize that the morphism
$k(t)[y]/E(t,y) \to k(u)[y]/E(u{-}\alpha,y)$ does \emph{not} extend to
a mapping $k((t)) \to \ell((u))$ sending $f$ to $f_\alpha$.
The next diagram summarizes the previous discussion:

\begin{center}
\begin{tikzpicture}[xscale=3,yscale=1.3]
\node (K) at (0,0) { $k(t)$ };
\node (Ka) at (1,0) { $k(u)$ };
\node (L) at (0,1) { $\displaystyle \frac{k(t)[y]}{E(t,y)}$ };
\node (La) at (1,1) { $\displaystyle \frac{k(u)[y]}{E(u{-}\alpha,y)}$ };
\node (M) at (0,2) { $k((t))$ };
\node (Ma) at (1,2) { $\ell((u))$ };
\draw (K)--(L)--(M);
\draw (Ka)--(La)--(Ma);
\draw[-latex] (K)--(Ka) node[midway, above] { $\tau_\alpha$ };
\draw[-latex] (L)--(La) node[midway, above] { $\tau_\alpha$ };
\draw[-latex,shorten >=1pt] (M) to [out=90,in=170,loop,looseness=4] (M);
\draw[-latex,shorten >=1pt] (Ma) to [out=90,in=10,loop,looseness=4] (Ma);
\node at (-0.2,2.6) { $\AlgSOp r$ };
\node at (1.2,2.6) { $\AlgSOp {r,u}$ };
\end{tikzpicture}
\end{center}

\noindent
Here $\AlgSOp r$ and $\AlgSOp {r,u}$ refer to the section operators 
defined in the usual way. We observe that they stabilize the subfields 
$\frac{k(t)[y]}{E(t,y)}$ and $\frac{k(u)[y]}{E(u{+}\alpha,y)}$,
respectively, since they can alternatively be defined by the relations:
\begin{equation}
\label{eq:defSrtSru}
\begin{array}{l@{\qquad}r@{\hspace{0.5ex}}l}
\text{over } \frac{k(t)[y]}{E(t,y)}\text{:} &
\AlgFOp^{-1} & = \sum_{r=0}^{p-1} t^{r/p} \AlgSOp r \medskip \\
\text{over } \frac{k(u)[y]}{E(u{-}\alpha,y)}\text{:} &
\AlgFOp^{-1} & = \sum_{r=0}^{p-1} u^{r/p} \AlgSOp {r,u} \\
\end{array}
\end{equation}
where $\AlgFOp$ is the Frobenius map 
(see also Eq.~\eqref{BCCD:eq:global-section}).

\begin{prop}
\label{prop:SrtSru}
The commutation 
$\AlgSOp {p-1,u} \circ\:\tau_\alpha = \tau_\alpha \circ \AlgSOp {p-1}$
holds over $\frac{k(t)[y]}{E(t,y)}$.
\end{prop}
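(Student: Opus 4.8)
The plan is to adapt the proof of Proposition~\ref{BCCD:prop:commResSec}, now comparing the section operators at $t=0$ and at $t=\alpha$ by writing $\AlgFOp^{-1}$ of a transformed element in two ways and then isolating a single coefficient. The essential preliminary observation is that $\tau_\alpha$, being a $k$-algebra isomorphism with $\tau_\alpha(t)=u{+}\alpha$, extends to an isomorphism between the $p$-th power towers $\frac{k(t)[y]}{E(t,y)}[t^{1/p}]$ and $\frac{k(u)[y]}{E(u{-}\alpha,y)}[u^{1/p}]$; indeed, since we work in characteristic $p$, $\tau_\alpha(t^{1/p})$ is the $p$-th root of $\tau_\alpha(t)$, namely $(u{+}\alpha)^{1/p}=u^{1/p}+\alpha^{1/p}$, and $\alpha^{1/p}\in k$ because $k$ is perfect, so that $k(u)\bigl(\tau_\alpha(t^{1/p})\bigr)=k(u)(u^{1/p})$. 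Being a ring homomorphism, $\tau_\alpha$ commutes with the Frobenius $\AlgFOp\colon x\mapsto x^p$, hence also with $\AlgFOp^{-1}$.

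For an arbitrary $g\in\frac{k(t)[y]}{E(t,y)}$ I would then compute $\AlgFOp^{-1}(\tau_\alpha g)$ in two ways. On the one hand, the defining relation~\eqref{eq:defSrtSru} over $\frac{k(u)[y]}{E(u{-}\alpha,y)}$ gives $\AlgFOp^{-1}(\tau_\alpha g)=\sum_{s=0}^{p-1}u^{s/p}\,\AlgSOp{s,u}(\tau_\alpha g)$. On the other hand, applying $\tau_\alpha$ to the relation~\eqref{eq:defSrtSru} over $\frac{k(t)[y]}{E(t,y)}$ and using the commutation just noted,
\[
  \AlgFOp^{-1}(\tau_\alpha g)
  =\tau_\alpha\Bigl(\textstyle\sum_{r=0}^{p-1}t^{r/p}\,\AlgSOp r(g)\Bigr)
  =\sum_{r=0}^{p-1}\bigl(u^{1/p}+\alpha^{1/p}\bigr)^{r}\,\tau_\alpha(\AlgSOp r g)
  =\sum_{r=0}^{p-1}\sum_{s=0}^{r}\binom{r}{s}\alpha^{(r-s)/p}\,u^{s/p}\,\tau_\alpha(\AlgSOp r g).
\]
Both expressions lie in $\frac{k(u)[y]}{E(u{-}\alpha,y)}[u^{1/p}]$, which is free of rank $p$ over $\frac{k(u)[y]}{E(u{-}\alpha,y)}$ with basis $(u^{s/p})_{0\le s<p}$ (separability of $E$ keeps the adjunction of $u^{1/p}$ of degree exactly $p$). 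Equating the coefficients of $u^{(p-1)/p}$ yields $\AlgSOp{p-1,u}(\tau_\alpha g)$ on the left-hand side, while on the right-hand side the requirement $s=p-1$ together with $0\le s\le r\le p-1$ forces $r=p-1$, leaving only the term $\binom{p-1}{p-1}\alpha^{0}\,\tau_\alpha(\AlgSOp{p-1}g)=\tau_\alpha(\AlgSOp{p-1}g)$. This proves $\AlgSOp{p-1,u}\circ\tau_\alpha=\tau_\alpha\circ\AlgSOp{p-1}$ over $\frac{k(t)[y]}{E(t,y)}$, as claimed.

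The manipulations with $p$-th roots and the final coefficient extraction are routine bookkeeping, paralleling Proposition~\ref{BCCD:prop:commResSec}. The one point that deserves care---and the main obstacle I anticipate---is making rigorous the extension of $\tau_\alpha$ to the $p$-th power tower and checking that $(u^{s/p})_{0\le s<p}$ really is a basis of $\frac{k(u)[y]}{E(u{-}\alpha,y)}[u^{1/p}]$ over $\frac{k(u)[y]}{E(u{-}\alpha,y)}$, so that ``identify the coefficient of $u^{(p-1)/p}$'' is a legitimate operation; concretely, one uses that $\frac{k(u)[y]}{E(u{-}\alpha,y)}$ is separable over $k(u)$ whereas $k(u)(u^{1/p})/k(u)$ is purely inseparable of degree $p$, hence these two subextensions are linearly disjoint over $k(u)$. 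It is precisely the top index $s=p-1$ that makes the argument collapse to a clean commutation: for a general index $r$ one only obtains the triangular relation $\AlgSOp{r,u}(\tau_\alpha g)=\sum_{r'\ge r}\binom{r'}{r}\alpha^{(r'-r)/p}\,\tau_\alpha(\AlgSOp{r'}g)$, and it is the maximal value of the index that singles out one section operator on each side.
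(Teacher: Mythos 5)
Your proof is correct and follows essentially the same route as the paper: commute $\tau_\alpha$ with $\AlgFOp^{-1}$, equate the two expansions of $\AlgFOp^{-1}(\tau_\alpha g)$, and identify the coefficient of $u^{(p-1)/p}$, where the binomial expansion of $(u^{1/p}+\alpha^{1/p})^{r}$ forces $r=p-1$. The paper states this more tersely, while you additionally spell out the legitimacy of the coefficient identification and the triangular relation for general $r$; these are correct elaborations, not deviations.
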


\begin{proof}
Clearly $\tau_\alpha$ commutes with the Frobenius because it is a
ring homomorphism. From the relations \eqref{eq:defSrtSru}, we then
derive
$\sum_{r=0}^{p-1} u^{r/p} \: \AlgSOp {r,u} \circ\:\tau_\alpha =
  \sum_{r=0}^{p-1} (u{+}\alpha)^{r/p} \: \tau_\alpha \circ \AlgSOp r$.
Identifying the coefficients in $u^{\frac{p-1}p}$, we get the
announced result.
\end{proof}

We emphasize that the other section operators $\AlgSOp {r,\star}$ 
(with $r < p{-}1$) \emph{do not commute} with $\tau_\alpha$: the above phenomenon 
is specific to the index $p{-}1$.
However, we can relate $\AlgSOp r$ and $\AlgSOp {p-1,u}$ as follows.

\begin{coro}
\label{coro:SrtSru}
For all $g \in \frac{k(t)[y]}{E(t,y)}$, we have
$\AlgSOp r(g) = 
\tau_\alpha^{-1} \circ \AlgSOp {p-1,u} \circ\:\tau_\alpha (t^{p-1-r} g)$.
\end{coro}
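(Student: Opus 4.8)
The plan is to reduce the statement to Proposition~\ref{prop:SrtSru} together with a completely elementary identity between section operators, namely $\AlgSOp{r}(g) = \AlgSOp{p-1}(t^{p-1-r} g)$, which contains no ``miracle'' at all.

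First I would note that $\tau_\alpha$ is an isomorphism $\frac{k(t)[y]}{E(t,y)} \to \frac{k(u)[y]}{E(u{-}\alpha,y)}$, so Proposition~\ref{prop:SrtSru} is equivalent to the operator identity $\tau_\alpha^{-1} \circ \AlgSOp{p-1,u} \circ \tau_\alpha = \AlgSOp{p-1}$ on $\frac{k(t)[y]}{E(t,y)}$. Since $0 \le p{-}1{-}r \le p{-}1$, the element $t^{p-1-r} g$ again lies in $\frac{k(t)[y]}{E(t,y)}$, and plugging it into this identity rewrites the right-hand side of the corollary as $\AlgSOp{p-1}(t^{p-1-r} g)$. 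Thus it remains to prove
\begin{equation}
\label{eq:locSrSpm1}
\AlgSOp{r}(g) \;=\; \AlgSOp{p-1}\bigl(t^{p-1-r}\, g\bigr)
\qquad\text{for } g\in\tfrac{k(t)[y]}{E(t,y)}.
\end{equation}

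To prove~\eqref{eq:locSrSpm1} I would transport it to $k((t))$ through the embedding $\eval{f}:\frac{k(t)[y]}{E(t,y)}\hookrightarrow k((t))$, which is injective (the source is a field, $E$ being irreducible), is $k(t)$-linear, and intertwines the section operators on both sides ($\AlgSOp{s}\circ\eval{f}=\eval{f}\circ\AlgSOp{s}$, as recalled in \S\ref{BCCD:sect:frobenius-sections}). Hence it suffices to check $\AlgSOp{r}(h)=\AlgSOp{p-1}(t^{p-1-r}h)$ for every Laurent series $h=\sum_n h_n t^n\in k((t))$, and this is immediate from the defining formula~\eqref{BCCD:eq:local-section}: in $t^{p-1-r}h=\sum_n h_n t^{n+p-1-r}$ the coefficient of $t^{pm+(p-1)}$ equals $h_{pm+r}$, so $\AlgSOp{p-1}(t^{p-1-r}h)=\sum_m h_{pm+r}^{1/p}t^m=\AlgSOp{r}(h)$.

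I do not expect a genuine obstacle: the nontrivial content---the commutation that survives the change of origin---has already been extracted in Proposition~\ref{prop:SrtSru}, and what is left is bookkeeping. The only point requiring a little attention is that everything must be phrased in terms of the top index $p{-}1$: as stressed right before the corollary, the operators $\AlgSOp{r,\star}$ with $r<p{-}1$ do \emph{not} commute with $\tau_\alpha$, and the multiplication by $t^{p-1-r}$ in~\eqref{eq:locSrSpm1} is precisely the device that rewrites $\AlgSOp{r}$ as $\AlgSOp{p-1}$ so that Proposition~\ref{prop:SrtSru} applies; one must also keep $0\le p{-}1{-}r<p$ so that no index wrap-around spoils the coefficient identity.
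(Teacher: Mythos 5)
Your proof is correct and follows exactly the paper's route: the paper's one-line proof likewise combines Proposition~\ref{prop:SrtSru} with the identity $\AlgSOp r(g) = \AlgSOp {p-1}(t^{p-1-r} g)$, which you merely verify in more detail by transporting it to $k((t))$ via $\eval{f}$ and using the defining formula~\eqref{BCCD:eq:local-section}.
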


\begin{proof}
This follows from Proposition \ref{prop:SrtSru}
and from
$\AlgSOp r(g) = \AlgSOp {p-1}(t^{p-1-r} g)$.
\end{proof}

\subsubsection*{A modified recurrence}

In order to use Corollary \ref{coro:SrtSru}, we need to check that
 $\tau_\alpha(t^{p-1-r} g)$ fits the ordinary case.
Recall the differential equation satisfied by~$\hat g$, 
$$\hat a_d \frac {\partial^d \hat g}{\partial t^d} + 
\hat a_{d-1} \frac {\partial^{d-1} \hat g}{\partial t^{d-1}} + 
\cdots + \hat a_1 \frac {\partial \hat g}{\partial t} + 
\hat a_0 \hat g = 0.$$
We set $r' = p-1-r$ and $\hat G = t^{r'} \hat g$. Applying Leibniz 
formula to $\hat g = t^{-r'} \hat G$, we get:
$$\frac{\partial^j \hat g}{\partial t^j} =
  \sum_{i=0}^j (-1)^i \binom{j}{i} r' (r'+1) \cdots (r'+i-1) t^{-r'-i} 
  \frac{\partial^j \hat G}{\partial t^{j-i}},$$
from which we derive the following differential equation satisfied
by $\hat G$:
$$\sum_{0 \leq i \leq j \leq d}
(-1)^i \hat a_j \binom j i r' (r'+1) \cdots (r'+i-1) t^{-r'-i}  
  \frac{\partial^{j-i} \hat G}{\partial t^{j-i}}.$$
Reorganizing the terms and multiplying by $t^{r'+d}$, we end up
with:
\begin{equation}
\label{eq:eqdiffG}
\sum_{j=0}^d \sum_{i=0}^{d-j}
(-1)^i \hat a_{i+j} \binom{i+j}{i} r' (r'+1) \cdots (r'+i-1) t^{d-i}
  \frac{\partial^j \hat G}{\partial t^j}.
\end{equation}
Set $W_{L,u} = W[u,y]/\hat E(u{+}\alpha,y)$ and define the ring
homomorphism
$\tau_{\hat \alpha} : W_L \to W_{L,u}$, $t \mapsto u{+}\hat\alpha$, 
$y \mapsto y$. Clearly $\tau_{\hat \alpha}$ lifts $\tau_\alpha$.
Applying $\tau_{\hat \alpha}$ to Eq.~\eqref{eq:eqdiffG} and noticing 
that $\frac{\partial}{\partial t} = \frac{\partial}{\partial u}$, we 
obtain:
$$\sum_{j=0}^d \sum_{i=0}^{d-j}
(-1)^i \hat \tau_{\hat\alpha}(a_{i+j}) \binom{i+j}{i} r' (r'+1) \cdots (r'+i-1) (u{+}\hat\alpha)^{d-i}
  \frac{\partial^j \tau_{\hat \alpha}(\hat G)}{\partial u^j}.$$ 

\subsubsection*{Conclusion}

The leading term of the latest differential equation (obtained only with 
$j = d$ and $i = 0$) is $\tau_{\hat\alpha}(\hat a_d) \: 
(u{+}\hat\alpha)^d$. Its value at $u = 0$ is then $\hat 
a_d(\hat\alpha)\:\hat\alpha^d$, which is not congruent to $0$ modulo $p$ by assumption. 
Moreover the other coefficients are polynomials in $u$ whose degrees 
stay within $\poly(dh)$.
Therefore, we can apply the techniques of \S \ref{ssec:ordinary} and 
compute $\AlgSOp {p-1,u} (\tau_\alpha G)$ at precision 
$O(u^{2dh})$ for a cost of $\poly(dh) \softO(s \sqrt p)$ bit 
operations. As explained in \S \ref{BCCD:sect:faster-algorithm}, we 
can reconstruct $\AlgSOp {p-1,u} (\tau_\alpha G)$ as an element of 
$k[u,y]/E(u{+}\alpha,y)$ for a cost of $\poly(dh)$ operations in $k$ 
using Hermite--Padé approximations.
Thanks to Corollary \ref{coro:SrtSru}, it now just remains to apply 
$\tau_\alpha^{-1}$ to get $\AlgSOp r (g)$. This last operation can
be performed for a cost of $\poly(dh)$ operations in $k$ as well.
All in all, we are able to compute $\AlgSOp r (g)$ for a total 
bit complexity of $\poly(dh) \softO(s \sqrt p)$. Repeating this
process $\log N$ times, we obtain the complexity announced 
in Theorem \ref{theo:Nth-via-recurrence}.

\smallskip
\bibliographystyle{abbrv}

\end{document}